% SIAM Article Template
\documentclass[final,hidelinks]{siamart1116Arxiv}

% uncomment to turn off SOME notes
% \renewcommand\mynote[2][]{\ifthenelse{\equal{#1}{}}{}{\ifthenelse{\equal{#1}{keep}}{\textcolor{red}{#2}}{\ifthenelse{\equal{#1}{cambi}}{\textcolor{red}{#2}}{\ifthenelse{\equal{#1}{color}}{#2}{\PackageError{"User defined command: mynote"}{optional 1st argument: unknown value "#1"}{available options: <empty>, "keep", "cambi", "color"}}}}}}
% uncomment to turn off ALL notes
%\renewcommand\mynote[2][]{}

\usepackage{tikz,pgfplots}
\pgfplotsset{compat=newest}
\usetikzlibrary{decorations.pathreplacing,calc}

\usepackage{xstring} % for IfEqCase

\usepackage[shortlabels]{enumitem}

\usepackage{cite}

\usepackage{amsfonts}

\usepackage{mathtools}
% \mathtoolsset{showonlyrefs}
\usepackage{autonum} % showonlyrefs option of mathtools incompatible with cleveref package

\numberwithin{theorem}{section}
\newsiamremark{rmrk}{Remark}

%%%%%%%%%% NEW COMMANDS %%%%%%%%%%

\newcommand{\Div}{{\rm div}\,}
%{\mathbb 1}
\newcommand{\st}{{\ | \ }} % "such that" in the definition of a set
\newcommand{\iii}{{\vert\kern-0.25ex\vert\kern-0.25ex\vert}}
\newcommand{\sol}{{u}}
\newcommand{\test}{{v}}
\newcommand{\gtest}{{\chi}}
\newcommand{\normal}{{\boldsymbol \nu}}
\newcommand{\rd}[1]{%
  \IfEqCase{#1}{%
    {x}{\xi}%
    {y}{\eta}%
    {(x,y)}{(\xi,\eta)}%
  }[\PackageError{\rd}{Opzione non definita per rd: #1}{}]%
}
\newcommand{\allhold}{{\Omega^*}}
\newcommand{\angolo}{{\psi}}
\newcommand{\curv}{{\mathcal{H}_\omega}}
\newcommand{\avW}[2]{{\tilde{W}^{#1}_{#2}(0,1)}}
\newcommand{\Wg}{\avW{1}{\infty}}
\newcommand{\Wc}{\avW{1}{1}}
\newcommand{\zW}[2]{%
  \mathrel{\vbox{\offinterlineskip\ialign{%
    \hfil##\hfil\cr
    $\scriptscriptstyle\circ$\cr
    \noalign{\kern0.1ex}
    ${\,W^{#1}_{#2}}$\cr
}}}(\Omega^\omega)}
\newcommand{\zWo}[2]{%
  \mathrel{\vbox{\offinterlineskip\ialign{%
    \hfil##\hfil\cr
    $\scriptscriptstyle\circ$\cr
    \noalign{\kern0.1ex}
    ${\,W^{#1}_{#2}}$\cr
}}}(\Omega^0)}
\newcommand{\zWsmall}[3]{%
  \mathrel{\vbox{\offinterlineskip\ialign{%
    \hfil##\hfil\cr
    $\scriptscriptstyle\circ$\cr
    \noalign{\kern0.1ex}
    $\scriptstyle{\,W^{#1}_{#2}}$\cr
}}}(\Omega^{#3})}
\newcommand{\Wsol}{\zW{1}{p}}
\newcommand{\Wtest}{\zW{1}{q}}
\newcommand{\Wtestsmall}{\zWsmall{1}{q}{\omega}}
\newcommand{\Wsolo}{\zWo{1}{p}}
\newcommand{\Wtesto}{\zWo{1}{q}}
\newcommand{\Wtestosmall}{\zWsmall{1}{q}{0}}
\newcommand{\Wp}{{W^1_p(\Omega^\omega)}}
\newcommand{\Wq}{{W^1_q(\Omega^\omega)}}
\newcommand{\Wpo}{{W^1_p(\Omega^0)}}
\newcommand{\Wqo}{{W^1_q(\Omega^0)}}
\newcommand{\Winf}{{W^1_\infty(0,1)}}
\newcommand{\Wone}{{W^1_1(\Omega^\omega)}}
\newcommand{\Vo}{%
  \mathrel{\vbox{\offinterlineskip\ialign{%
    \hfil##\hfil\cr
    $\scriptscriptstyle\circ$\cr
    \noalign{\kern0.4ex}
    ${\,V_h}$\cr
}}}}
\newcommand{\So}{{\tilde{S}_h}}
\newcommand{\interpol}{{\mathcal{I}}}
\newcommand{\sgn}{{\rm sgn}}
\newcommand{\epsw}{{\varepsilon_{fb}}}

\renewcommand{\hat}[1]{\widehat{#1}}

% Title and author
\newcommand{\TheTitle}{A free-boundary problem with moving contact points}
\newcommand{\TheAuthors}{I. Fumagalli}
\headers{\TheTitle}{\TheAuthors}
\title{{\TheTitle}\thanks{\funding{This work was supported by Moxoff s.p.a. (\url{www.moxoff.com})}}}
\author{
  Ivan Fumagalli\thanks{MOX - Department of Mathematics, Politecnico di Milano, piazza Leonardo da Vinci 32, 20133 Milano, Italy
    (\email{ivan.fumagalli@polimi.it}).}
}

% Optional PDF information
\ifpdf
\hypersetup{
  pdftitle={\TheTitle},
  pdfauthor={\TheAuthors}
}
\fi

%%%%%%%%%%%%%%%%%%%%%%%%%%%%%%%%%%%%%%%%%%%%%%%%%%%%%%%%%%%%%%%%%%%%%%%%%%%%%%%
%%%%%%%%%%%%%%%%%%%%%%%%%%%%%%%%%%%%%%%%%%%%%%%%%%%%%%%%%%%%%%%%%%%%%%%%%%%%%%%
%%%%%%%%%%%%%%%%%%%%%%%%%%%%%%%%%%%%%%%%%%%%%%%%%%%%%%%%%%%%%%%%%%%%%%%%%%%%%%%

\begin{document}

\maketitle

\begin{abstract}
 This paper concerns the theoretical and numerical analysis of a free boundary problem for the Laplace equation, with a curvature condition on the free boundary.
 This boundary is described as the graph of a function, and contact angles are imposed at the moving contact points.
 The equations are set in the framework of classical Sobolev Banach spaces, and existence and uniqueness of the solution are proved via a fixed-point iteration, exploiting a suitably defined lifting operator from the free boundary.
 The free-boundary function and the bulk solution are approximated by piecewise linear finite elements, and the well-posedness and convergence of the discrete problem are proved.
 This proof hinges upon a stability result for the Riesz projection onto the discrete space, which is separately proven and has an interest per se.
\end{abstract}

\begin{keywords}
  free boundary, moving contact points, contact angle, finite element method
\end{keywords}

\begin{AMS}
  35R35, 35J20, 35J47, 65N12, 65N30
\end{AMS}

%%%%%%%%%%%%%%%%%%%%%%%%%%%%%%%%%%%%%%%%%%%%%%%%%%%%%%%%%%%%%%%%%%%%%%%%%%%%%%%%
%%%%%%%%%%%%%%%%%%%%%%%%%%%%%%%%%%%%%%%%%%%%%%%%%%%%%%%%%%%%%%%%%%%%%%%%%%%%%%%%
\section*{Introduction}
\label{sec:intro}
\addcontentsline{toc}{section}{\nameref{sec:intro}}
%%%%%%%%%%%%%%%%%%%%%%%%%%%%%%%%%%%%%%%%%%%%%%%%%%%%%%%%%%%%%%%%%%%%%%%%%%%%%%%%
%%%%%%%%%%%%%%%%%%%%%%%%%%%%%%%%%%%%%%%%%%%%%%%%%%%%%%%%%%%%%%%%%%%%%%%%%%%%%%%%

Free boundary problems governed by PDEs present many different features, that make their theoretical and numerical analysis a challenging task.
In the present work, a free boundary problem for the Laplacian with a curvature condition is considered, in the presence of moving contact points.
The free boundary is described as the graph of a function, and Neumann conditions are imposed at the end points, in order to account for the enforcement of a contact angle.

A milestone work on this subject is represented by \cite{SS91}.
In that paper, a free boundary problem for the fully Dirichlet Laplacian was investigated, in the case of {\em fixed} contact points.
The well-posedness of the continuous problem, and the stability and convergence of its piecewise linear finite element approximation were proved.
Few extensions of that work are available in the literature, in the direction of generalizing the results to the Stokes operator \cite{GiraultNochettoScott}, potential flows \cite{BaiChooChungKim} or optimal control problems governed by free boundary systems \cite{ANS14}.
In the case of shape optimization problems, in which moving boundary are similarly entailed, different techniques have been employed, to draw a theoretical and numerical analysis of the problem (see, e.g.~\cite{FPVShOpt,KinigerVexler,Eppler}).
However, the presence of moving contact points is still an open problem, in the theoretical literature.
Indeed, as stated in the conclusions of \cite{SS91}, this objective is not straightforwardly achievable, and a careful consideration of the boundary conditions is crucial.

This paper aims at extending the results of \cite{SS91} to the case of a free boundary with moving contact points.
This represents a first step towards a better theoretical and numerical description of free surface flows with moving contact lines, which are relevant in many applications and whose study is the subject of an active computational literature (see, e.g.~\cite{MovingCL,Gerbeau,Walker,Scardovelli}).
The free-boundary problem is set in the framework of classical $W^k_p$ Sobolev spaces, and in order to prove its well-posedness, a proper definition of a lifting operator is introduced, connecting the bulk problem with the equation governing the free boundary.
The continuous problem is, then, discretized by means of a piecewise linear finite element method, and the stability and convergence of the resulting scheme are proved, resorting to the proof of a $W^1_p$ stability result for the Riesz projection onto the discrete space.
In this regard, a result presented in \cite{RannacherScott} for a fully Dirichlet bulk problem is extended to the case of mixed boundary conditions.

The present paper is made of two parts.
\cref{sec:pb} is devoted to the definition of the free boundary problem under inspection and to the analysis of its weak formulation. The proof of its well-posedness via a fixed point iteration is provided in \cref{subsec:wp}.
In \cref{sec:discrete}, a piecewise linear finite element approximation is introduced for both the bulk solution and the free-boundary function.
Stability and convergence of the numerical scheme is stated, hinging upon the stability of the Riesz projection onto the discrete scheme, to whose proof \cref{subsec:Riesz} is dedicated.

%%%%%%%%%%%%%%%%%%%%%%%%%%%%%%%%%%%%%%%%%%%%%%%%%%%%%%%%%%%%%%%%%%%%%%%%%%%%%%%%
%%%%%%%%%%%%%%%%%%%%%%%%%%%%%%%%%%%%%%%%%%%%%%%%%%%%%%%%%%%%%%%%%%%%%%%%%%%%%%%%
\section{Problem definition}\label{sec:pb}
%%%%%%%%%%%%%%%%%%%%%%%%%%%%%%%%%%%%%%%%%%%%%%%%%%%%%%%%%%%%%%%%%%%%%%%%%%%%%%%%
%%%%%%%%%%%%%%%%%%%%%%%%%%%%%%%%%%%%%%%%%%%%%%%%%%%%%%%%%%%%%%%%%%%%%%%%%%%%%%%%

\begin{figure}
 \centering
 \begin{tikzpicture}[scale=1]
   \begin{scope}[local bounding box=scope1]
		\draw[thick] (0,3) -- node[left]{$\Sigma^0$} (0,0)-- node[below]{$\Sigma_b$} (3,0) -- node[right]{$\Sigma^0$} (3,3) -- node[above]{$\Gamma^0$} (0,3);
		\node[anchor=north] at (1.5,1.5) {$\Omega^0$};
    \node[fill,draw,circle, minimum width=2pt, inner sep=0pt, label={[label distance=1pt, inner sep=0pt]45:$\rd{(x,y)}$}] at (1,2) {};
    \coordinate (riferimento) at (3,0);
    \coordinate (arrLeft) at (4,1.5);
   \end{scope}
   \begin{scope}[shift={($(riferimento)+(5,0)$)}]
  	\draw[thick] (0,3) -- node[left]{$\Sigma^\omega$} (0,0)-- node[below]{$\Sigma_b$} (3,0) -- node[right]{$\Sigma^\omega$} (3,2.7);
	 	\draw[thick] (0,3) to[out=0,in=150] (1.5,3.2) node[above]{$\Gamma^\omega$} to[out=-30,in=-160] (3,2.7);
	  \node[anchor=north] at (1.5,1.5) {$\Omega^\omega$};
    \node[fill,draw,circle, minimum width=2pt, inner sep=0pt, label={[label distance=1pt, inner sep=0pt]45:$(x,y)$}] at (1,2) {};
    \draw (3,2.4) arc (-90:-160:0.3) node[below,pos=0.7]{$\theta$};
    \coordinate (arrRight) at (-1,1.5);
	\end{scope}
  \draw[thick, ->] (arrLeft) to[out=30,in=150] node[above]{$\Psi^\omega$} (arrRight);
 \end{tikzpicture}
 \caption{Reference domain (left) and actual configuration (right) for the problem.\label{fig:domain}}
\end{figure}
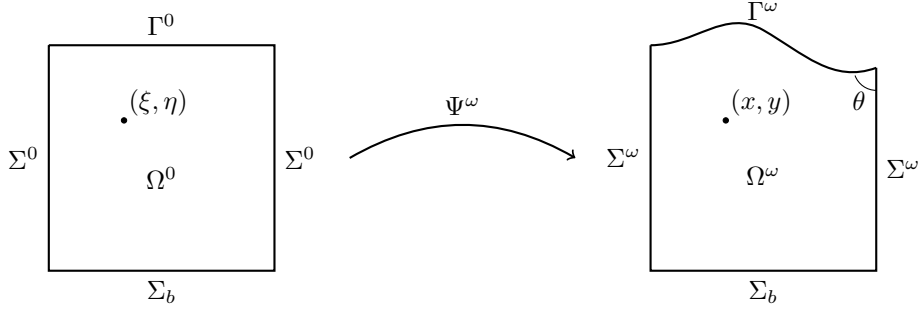

Let $\Omega^\omega\in\mathbb{R}^2$ be a free-boundary, bounded domain defined as
\begin{equation}\label{eq:domain}
    \Omega^\omega = \{(x,y) \st x\in (0,1), y\in (0,1+\omega(x))\},
\end{equation}
where $\omega\in W^1_\infty(0,1)$ is a function such that $\|\omega\|_{W^1_\infty}<1$.
We denote by $\Gamma^\omega$ the top boundary of $\Omega^\omega$:
\begin{equation}
    \Gamma^\omega = \{(x,1+\omega(x)) \st x\in(0,1)\}.
\end{equation}
As displayed in \cref{fig:domain}, the lateral boundary of the domain is named $\Sigma^\omega$, whereas $\Sigma_b$ is the bottom side.
This domain $\Omega^\omega$ is the image of the unit square $\Omega^0=(0,1)^2$ through the $W^1_\infty$-regular map
\begin{equation}
    \Psi^\omega \colon \Omega^0 \to \mathbb{R}^2, \qquad (x,y) = \Psi^\omega(\rd{x},\rd{y}) = \left(\rd{x} , (1+\omega(\rd{x}))\rd{y}\right).
\end{equation}
One can notice that, being $\omega$ bounded, all the possible $\Omega^\omega$ are contained in the all-holding domain $\allhold=(0,1)\times(0,2)$.

Given a Lebesgue space exponent $p\in[2,\infty)$, with its conjugate $q\colon 1/p+1/q=1$, the free-surface problem addressed in the present work is to find $(\omega, \sol)\in W^1_\infty(0,1)\times W^1_p(\Omega^\omega)$ such that
\begin{equation}\label{eq:pb}
    \begin{cases}
	-\Delta \sol = 0 & \text{in } \Omega^\omega, \\
	\sol = g & \text{on } \Sigma_b \cup \Gamma^\omega, \\
	\partial_\normal \sol = 0 & \text{on } \Sigma^\omega, \\
	\partial_\normal \sol = \gamma\curv & \text{on } \Gamma^\omega, \\
	\omega'(0)=0,\qquad \omega'(1) = \angolo, \\
	\int_0^1 \omega(t)\,dt=0,
    \end{cases}
\end{equation}
where the function $g\in W^1_\infty(\allhold)$ is given, $\normal$ is the unit outward normal vector of the domain, $\curv=-\left(\omega'/\sqrt{1+(\omega')^2)}\right)'$ is the curvature of the top boundary, $\angolo=\cot\theta$ is a prescribed steepness of the top boundary at its right end, and $\gamma>0$ represents a surface tension coefficient.
The conditions on the first derivative of $\omega$ prescribe the angles between the free boundary $\Gamma^\omega$ and the wall $\Sigma^\omega$, that have to be $\pi/2$ at the left contact point and $\theta$ at the right one.
In particular, the left condition $\omega'(0)=0$ is the one that arises if the line $x=0$ is a symmetry axis, and we look at $\Omega^\omega$ as the section of a planarly symmetric or axisymmetric domain: indeed, this kind of symmetries are often involved in the applications (see, for example, \cite{axisymm,MovingCL,Yamamoto}).
\begin{rmrk}
    The last equation in problem \cref{eq:pb} is a zero-average constraint on the function $\omega$.
    This is necessary to ensure the uniqueness of $\omega$, since this function appears in the equations only through its derivatives.
    This constraint corresponds to an area/volume constraint on the domain.
\end{rmrk}
\begin{rmrk}
    Throughout the present work, the linearized curvature\linebreak[4] $\curv:=-\omega''/\sqrt{1+(\omega')^2}$ will be considered.
    This choice prevents the functional setting of the problem from getting technically over-complicated, without affecting the generality of the results, as pointed out also in \cite{ANS14, SS91}.
    For simplicity, we use the same symbol $\curv$ already adopted for the complete curvature introduced above.
\end{rmrk}

%%%%%%%%%%%%%%%%%%%%%%%%%%%%%%%%%%%%%%%%%%%%%%%%%%%%%%%%%%%%%%%%%%%%%%%%%%%%%%%%
\subsection{Weak formulation of the problem}\label{subsec:weakpb}
%%%%%%%%%%%%%%%%%%%%%%%%%%%%%%%%%%%%%%%%%%%%%%%%%%%%%%%%%%%%%%%%%%%%%%%%%%%%%%%%

As stated above, the variational framework in which the problem at hand is set involves the classical Sobolev spaces $W^1_\infty(0,1)$, for the free-boundary function $\omega$, and $W^1_p(\Omega^\omega)$, for the bulk solution $\sol$.
In order to account for the boundary conditions and the zero-average constraint, the following spaces are introduced:
\begin{align}
    \avW{k}{s} &= \left\{\omega\in W^k_s(0,1) \ \left\vert\ \int_0^1\omega\, dt=0\right.\right\}, \\
    \zW{k}{s} &= \{\sol\in W^k_s(\Omega^\omega) \st \sol=0 \text{ on } \Sigma_b\cup\Gamma^\omega\}, \\
    \mathbb{W} &= \Wg\times\Wsol, \\
    \mathbb{Z} &= \Wc\times\Wtest,
\end{align}
where $q=p/(p-1)$.
\begin{rmrk}[Poincar\'e inequality]\label{rmrk:poincare}
    In both $\avW{1}{s}$ and $\zW{1}{s}$ Poincar\'e inequality holds, for any $s\in[1,\infty]$ (see, e.g., \cite[Theorems 8.11-8.12]{poincareMedia}).
    For each $s\in[1,\infty]$ we will denote by $c_s,C_s$ the positive constants such that
    \begin{gather}
     \|\omega\|_{W^1_s(0,1)}\leq c_s\|\omega'\|_{L^s(0,1)},\quad\forall\omega\in  W^1_s(0,1), \\
     \|\sol\|_{W^1_s(\Omega^\omega)}\leq C_s\|\nabla\sol\|_{L^s(\Omega^\omega)},\quad\forall\sol\in W^1_s(\Omega^\omega),
    \end{gather}
    with $c_s,C_s$ independent of $\omega$, thanks to the assumption $\|\omega\|_{W^1_\infty(0,1)}<1$.
\end{rmrk}

Problem \cref{eq:pb} can be stated in weak form as:
Find $(\omega,\sol-g)\in\mathbb{W}$ such that, for any $(\gtest,\test)\in\mathbb{Z}$,
\begin{equation}\label{eq:weak}
    \begin{cases}
	a^\omega(\sol,\test) = 0, \\
	b(\omega,\gtest) = a^\omega(\sol,E^\omega\gtest) + \angolo\gtest(1),
    \end{cases}
\end{equation}
where
\begin{align}
    a^\omega(\sol,\test) &= \int_{\Omega^\omega} \nabla\sol\cdot\nabla\test\,dx, \\
    b(\omega,\gtest) &= \int_0^1 \omega'\gtest'\, dt,
\end{align}
and $E^\omega\gtest$ is a suitable extension of $\gtest$ onto $\Omega^\omega$, that is going to be defined in Lemma \ref{lmm:extension}.
Indeed, provided that such an extension is zero on $\Sigma_b$, we can write that
\begin{align}
    \int_0^1 \partial_\normal\sol(t,1+\omega(t))&\gtest(t)\sqrt{1+(\omega'(t))^2}\,dt = \int_{\Gamma^\omega}\partial_\normal\sol\,\gtest\,d\Gamma \\= &\int_{\Gamma^\omega}\partial_\normal\sol\, E^\omega\gtest \, d\Gamma = \int_{\Omega^\omega} \nabla\sol\cdot\nabla E^\omega\gtest\,dx,
\end{align}
and since the boundary conditions on $\Gamma$ require
\begin{align}
 b(\omega,\gtest) &= \int_0^1\omega'\gtest' = -\int_0^1\omega''\gtest + \angolo\gtest(1) \\
 &= \int_0^1\partial_\normal\sol(t,1+\omega(t))\gtest(t)\sqrt{1+(\omega'(t))^2}+\angolo\gtest(1),
\end{align}
we have that \cref{eq:weak} is actually the weak formulation of \cref{eq:pb}.

\begin{lemma}[Extension]\label{lmm:extension}
    For every $\gtest\in W^1_1(0,1)$ there exists an extension $E^\omega\gtest\in W^1_q(\Omega^\omega)$, as long as $q<2$, such that $E^\omega\gtest|_{\Gamma^\omega}=\gtest$, $E^\omega\gtest|_{\Sigma_b}=0$, and
    \begin{equation}\label{eq:extension}
	\|E^\omega\gtest\|_\Wq \leq c_0(\|\omega\|_\Winf)\|\gtest\|_\Wone,
    \end{equation}
    where $c_0$ depends only on $\|w\|_{W^1_\infty(0,1)}$, and not on the extension.
\end{lemma}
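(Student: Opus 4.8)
The plan is to pull the construction back to the reference square $\Omega^0=(0,1)^2$ and there build an explicit lifting by averaging the datum at a length scale equal to the distance from the top side. Because $\Psi^\omega$ is bi-Lipschitz with Jacobian $1+\omega$ bounded above and below by positive constants depending only on $\|\omega\|_\Winf$, it is enough to produce $\tilde E\gtest\in\Wqo$ with $\tilde E\gtest=\gtest$ on $\{\rd{y}=1\}$, $\tilde E\gtest=0$ on $\{\rd{y}=0\}$, and $\|\tilde E\gtest\|_\Wqo\le C\|\gtest\|_{W^1_1(0,1)}$; setting $E^\omega\gtest=\tilde E\gtest\circ(\Psi^\omega)^{-1}$ then gives the result, since $(\Psi^\omega)^{-1}$ sends $\Gamma^\omega$ to $\{\rd{y}=1\}$ and $\Sigma_b$ to $\{\rd{y}=0\}$ while preserving the horizontal coordinate, and the change of variables inflates the norm only by a factor depending on $\|\omega\|_\Winf$. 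It is worth noting why the naive choice $\tilde E\gtest(\rd{x},\rd{y})=\rd{y}\,\gtest(\rd{x})$, which does satisfy the two trace conditions, is not good enough: its horizontal derivative contains $\rd{y}\,\gtest'(\rd{x})$, whose $L^q$ norm involves $\int_0^1|\gtest'|^q$, and this cannot be bounded by $\|\gtest'\|_{L^1}$. Overcoming this mismatch is the whole point of the averaging.

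Fix $\rho\in C_c^\infty(\mathbb R)$ with $\int\rho=1$, and extend $\gtest$ to $\bar\gtest\in W^1_1(\mathbb R)$ by reflection across the endpoints, so that $\|\bar\gtest\|_{W^1_1(\mathbb R)}\le C\|\gtest\|_{W^1_1(0,1)}$ and $\|\bar\gtest\|_{L^\infty(\mathbb R)}\le C\|\gtest\|_{L^\infty(0,1)}$. Define
\begin{equation}
  \tilde E\gtest(\rd{x},\rd{y}) = \rd{y}\int_{\mathbb R}\rho(t)\,\bar\gtest\big(\rd{x}-(1-\rd{y})t\big)\,dt.
\end{equation}
The prefactor $\rd{y}$ forces $\tilde E\gtest=0$ on $\{\rd{y}=0\}$, whereas on $\{\rd{y}=1\}$ the averaging kernel collapses to a point and $\int\rho=1$ yields $\tilde E\gtest=\gtest$, so both trace conditions hold.

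The estimate is the core of the argument. For fixed $\rd{y}$, writing $d=1-\rd{y}$ and $\rho_d(s)=d^{-1}\rho(s/d)$, the horizontal derivative is $\partial_{\rd{x}}\tilde E\gtest(\cdot,\rd{y})=\rd{y}\,(\rho_d*\bar\gtest')$, a mollification of $\bar\gtest'$ at scale $d$. Young's convolution inequality together with $\|\rho_d\|_{L^q}=d^{1/q-1}\|\rho\|_{L^q}$ gives $\|\partial_{\rd{x}}\tilde E\gtest(\cdot,\rd{y})\|_{L^q(0,1)}\le d^{1/q-1}\|\rho\|_{L^q}\|\bar\gtest'\|_{L^1}$. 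Raising to the power $q$ and integrating in $\rd{y}$ leaves $\int_0^1(1-\rd{y})^{\,1-q}\,d\rd{y}=(2-q)^{-1}$, which is finite exactly when $q<2$: this is where the hypothesis on $q$ is used. The vertical derivative splits as $\partial_{\rd{y}}\tilde E\gtest=\int_{\mathbb R}\rho(t)\,\bar\gtest(\rd{x}-(1-\rd{y})t)\,dt+\rd{y}\int_{\mathbb R}t\rho(t)\,\bar\gtest'(\rd{x}-(1-\rd{y})t)\,dt$; the first term is controlled pointwise by $\|\rho\|_{L^1}\|\bar\gtest\|_{L^\infty}$, and the second is once more a mollification of $\bar\gtest'$, now with kernel $t\rho(t)$, estimated exactly as above. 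Since $\tilde E\gtest$ itself is bounded by $\|\rho\|_{L^1}\|\bar\gtest\|_{L^\infty}$, collecting the bounds and using the one-dimensional embedding $W^1_1(0,1)\hookrightarrow L^\infty(0,1)$ to absorb the $L^\infty$ terms yields $\|\tilde E\gtest\|_\Wqo\le C(q)\|\gtest\|_{W^1_1(0,1)}$.

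I expect the main obstacle to be exactly this gradient estimate: the rigid linear lifting fails, and the right object is the average-at-scale-$d$ extension, whose gradient lies in $L^q$ only because the singular weight $(1-\rd{y})^{1-q}$ produced by Young's inequality is integrable precisely under $q<2$. The remaining steps — reflecting $\gtest$ past the lateral edges, checking the two traces, and transporting the bound to $\Omega^\omega$ with a constant $c_0(\|\omega\|_\Winf)$ coming from the bounds on $D\Psi^\omega$, $(D\Psi^\omega)^{-1}$, and $\det D\Psi^\omega$ — are routine.
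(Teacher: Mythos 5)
Your proof is correct, but it takes a genuinely different route from the paper's. The paper argues abstractly: it first extends $\chi$ to the whole boundary $\partial\Omega^0$ (zero on the bottom, equal to $\eta\,\chi(t)$ on the lateral sides $t=0,1$), invokes the embedding $W^1_1(0,1)\subset W^{1-1/q}_q(0,1)$ --- valid precisely for $q<2$ --- and then applies the inverse trace theorem to produce $\hat E\chi\in W^1_q(\Omega^0)$ with norm controlled by $\|\chi\|_{W^1_1(0,1)}$, before transporting to $\Omega^\omega$ via $\Psi^\omega$ exactly as you do. Your construction replaces these two black boxes (fractional embedding plus a right inverse of the trace operator) by an explicit lifting that mollifies the datum at a scale equal to the distance from the top side; the restriction $q<2$ then appears concretely as the integrability of the weight $(1-\eta)^{1-q}$ produced by Young's inequality, rather than as the threshold of a Sobolev embedding. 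The computations (the scaling $\|\rho_d\|_{L^q}=d^{1/q-1}\|\rho\|_{L^q}$, the splitting of the vertical derivative, the use of $W^1_1(0,1)\hookrightarrow L^\infty(0,1)$) are all sound, and your formula is essentially the classical constructive proof of the inverse trace theorem in this geometry, so what you gain is self-containedness and transparency about where $q<2$ enters. One point worth flagging: the paper's extension is deliberately chosen to be linear in $\eta$ on the lateral sides $\Sigma^0$; this is not required by the lemma as stated, but it is exploited later (Remark 3.2) to guarantee that $\hat E\chi_h-J_h\hat E\chi_h$ vanishes on the Neumann boundary. Your mollified extension does not have a linear lateral trace, so it proves the lemma as stated but could not be substituted verbatim into the discrete analysis without revisiting that step.
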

\begin{proof}
    Given some $\gtest\in W^1_1(0,1)$, let $\overline{\gtest}:\partial\Omega^0\to\mathbb{R}$ be an extension of $\gtest$ to the whole boundary of the reference domain $\Omega^0$, such that $\overline{\gtest}|_{\Gamma^0}=\gtest$, $\overline{\gtest}|_{\Sigma_b}=0$, and $\overline{\gtest}(t,\rd y)=\rd{y}\gtest(t)$, $t=0,1$.
    Thanks to the compact embedding $W^1_1(0,1)\subset W^{1-1/q}_q(0,1)$, holding for $q<2$, $\gtest$ is $W^{1-1/q}_q$-regular, and so is $\overline{\gtest}$ \cite{ASV88}.
    Therefore, $\overline{\gtest}$ can be extended as a function $\hat E\gtest: \Omega^0\to\mathbb{R}$.
    Thanks to the theory of traces, this bulk extension can be done in such a way that $\hat E\gtest \in W^1_q(\Omega^0)$ and
    \begin{equation}
	\|\hat E\gtest\|_{W^1_q(\Omega^0)} \leq C\|\overline\gtest\|_{W^{1-1/q}_q(\partial\Omega^0)} \leq \hat c_0\|\gtest\|_{W^1_1(0,1)},
    \end{equation}
    with $\hat c_0$ independent of $\gtest,\omega$.
    Eventually, $\hat E\gtest$ can be continuously mapped to a $W^1_q$-regular $E^\omega\gtest\colon\Omega^\omega\to\mathbb R$ by means of the change of variables induced by $\Psi^\omega$, and the following steps conclude the proof:
  \begin{equation}
    \|E^\omega\gtest\|_\Wq \leq c(\|\omega\|_{\Winf})\|\hat E\gtest\|_{W^1_q(\Omega^0)} \leq \hat c_0c(\|\omega\|_\Winf)\|\gtest\|_{W^1_1(0,1)}.
  \end{equation}
  \
\end{proof}

\begin{rmrk}
    The extension $E^\omega\gtest$ is not unique, but this does not affect problem \cref{eq:weak}, since for any given pair of admissible extensions $E_1^\omega,E_2^\omega$, we have that\linebreak[4]
    $E_1^\omega\gtest-E_2^\omega\gtest\in\Wtest$ for any $\gtest\in \Wone$, whence $a^w(\sol,E_1^\omega\gtest-E_2^\omega\gtest)=0$.
\end{rmrk}

%%%%%%%%%%%%%%%%%%%%%%%%%%%%%%%%%%%%%%%%%%%%%%%%%%%%%%%%%%%%%%%%%%%%%%%%%%%%%%%%
\subsection{Well-posedness of the problem}\label{subsec:wp}
%%%%%%%%%%%%%%%%%%%%%%%%%%%%%%%%%%%%%%%%%%%%%%%%%%%%%%%%%%%%%%%%%%%%%%%%%%%%%%%%

In this section, the proof of the well-posedness of the weak problem \cref{eq:weak} is addressed.
Following the ideas of \cite{SS91,ANS14}, the well-posedness of the individual problems on $\omega$ and $\sol$ is going to be proved, and then, the result for the coupled problem will be achieved via a fixed-point iteration.
The fixed-point iteration that will be considered is the following:
given $(\omega,\sol)\in\mathbb W$, let $\widetilde\omega\in\Wg$ be the solution of
\begin{equation}\label{eq:gammatilde}
    b(\widetilde\omega,\gtest) = a^\omega(\sol,E^\omega\gtest)+\angolo\gtest(1), \qquad \forall\gtest\in\Wc,
\end{equation}
and then let $\widetilde\sol-g\in\Wsol$ solve
\begin{equation}\label{eq:soltilde}
    a^{\widetilde\omega}(\widetilde\sol,\test) = 0, \qquad \forall\test\in W^1_q(\Omega^{\widetilde\omega}).
\end{equation}
We are going to show that this is actually a fixed-point iteration in the compact set
\begin{equation}\label{eq:fpset}
    \mathbb B = \left\{(\omega,\sol)\in \mathbb W \st \|\omega\|_\Winf\leq\epsw, \|\sol\|_\Wp\leq \varepsilon\right\},
\end{equation}
for a suitable choice of $0<\epsw,\varepsilon<1$, and that the map
\begin{equation}\label{eq:fpmap}
    T : \mathbb B \to \mathbb W, \qquad T(\omega,\sol) = (T_1(\omega,\sol), T_2(T_1(\omega,\sol),\sol)) = (\widetilde\omega,\widetilde\sol),
\end{equation}
is a contraction map.
To this aim, it is worth to introduce some notation related to the mapping $\Psi^\omega$ induced by $\omega$.
We denote by $\hat\cdot$ the composition with $\Psi^\omega$: if not clear from the context, it will be explicitly stated which particular choice for $\omega$ is considered.
With this notation, we introduce the bilinear form
\begin{equation}
 \hat a(\cdot,\cdot;\omega) : \Wpo\times\Wqo\to\mathbb R \quad \text{ such that } \quad \hat a(\hat\sol,\hat\test;\omega) = \int_{\Omega^0}\nabla\hat\sol^TA^\omega\nabla\hat\test,
\end{equation}
where $A^\omega=|\det\nabla\Psi^\omega|(\nabla\Psi^\omega)^{-1}(\nabla\Psi^\omega)^{-T}$.
We point out that $\hat a(\hat\sol,\hat\test;\omega) = a^\omega(\sol,\test)$ for any $\sol\in\Wsol$ and $\test\in\Wtest$.
The properties of the forms $\hat a$ and $a^\omega$ are very strictly related, thanks to the following \cref{lmm:normequiv} on the equivalence of norms, which is based on the inequality
 \begin{equation}\label{eq:CA}
  \mathbf v^TA^\omega\mathbf v\leq C_A |\mathbf v|^2, \qquad \forall\mathbf v\in\mathbb R^2,
 \end{equation}
  holding for $\omega$ ranging in the unit ball of $W^1_\infty(0,1)$ and being $C_A>0$ independent of $\omega$.
\begin{lemma}\label{lmm:normequiv}
 There exists a constant $c_n>0$ such that
 \begin{equation}
  \frac{1}{c_n}\|\sol\|_{W^1_p(\Omega^\omega)} \leq \|\hat\sol\|_{W^1_p(\Omega^0)} \leq c_n\|\sol\|_{W^1_p(\Omega^\omega)},
 \end{equation}
 for any $\sol\in W^1_p(\Omega^w)$, $p\in[1,\infty]$ and $\omega\in W^1_\infty(0,1)$ such that $\|\omega\|_{W^1_\infty(0,1)}< 1$.
\end{lemma}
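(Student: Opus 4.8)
The plan is to read the norm equivalence directly off the change of variables $\Psi^\omega$, controlling the pullback through pointwise bounds on its Jacobian. First I would compute, for $(\rd{x},\rd{y})\in\Omega^0$,
\[
\nabla\Psi^\omega = \begin{pmatrix} 1 & 0 \\ \omega'(\rd{x})\,\rd{y} & 1+\omega(\rd{x}) \end{pmatrix},
\qquad
\det\nabla\Psi^\omega = 1+\omega(\rd{x}),
\]
so that $(\nabla\Psi^\omega)^{-1}$ has entries $1$, $0$, $-\omega'(\rd{x})\rd{y}/(1+\omega)$ and $1/(1+\omega)$. Since $\|\omega\|_\Winf<1$ forces both $\|\omega\|_{L^\infty(0,1)}<1$ and $\|\omega'\|_{L^\infty(0,1)}<1$, I obtain the two-sided determinant bound $0<1-\|\omega\|_\Winf\leq\det\nabla\Psi^\omega\leq 1+\|\omega\|_\Winf<2$, together with a matrix bound $|\nabla\Psi^\omega|,\,|(\nabla\Psi^\omega)^{-1}|\leq M$ valid uniformly on $\Omega^0$, where $M$ depends only on $\|\omega\|_\Winf$ through the factor $1/(1-\|\omega\|_\Winf)$.

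Next, for $p\in[1,\infty)$ I would split $\|\sol\|_{W^1_p(\Omega^\omega)}^p$ into its $L^p$ part and its gradient part and transform each by $(x,y)=\Psi^\omega(\rd{x},\rd{y})$. For the $L^p$ part the change of variables gives $\int_{\Omega^\omega}|\sol|^p = \int_{\Omega^0}|\hat\sol|^p\,\det\nabla\Psi^\omega$, and the determinant bounds yield equivalence with constants $(1\pm\|\omega\|_\Winf)^{1/p}$. For the gradient part I would use the chain rule $\nabla\hat\sol = (\nabla\Psi^\omega)^{T}\,\widehat{\nabla\sol}$, equivalently $\widehat{\nabla\sol} = (\nabla\Psi^\omega)^{-T}\nabla\hat\sol$, so that the bound $M$ makes $|\widehat{\nabla\sol}|$ and $|\nabla\hat\sol|$ pointwise equivalent; integrating these inequalities against the bounded density $\det\nabla\Psi^\omega$ shows that $\|\nabla\sol\|_{L^p(\Omega^\omega)}$ and $\|\nabla\hat\sol\|_{L^p(\Omega^0)}$ are equivalent. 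Collecting the two parts gives the claimed inequalities with a constant $c_n=c_n(\|\omega\|_\Winf)$.

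The case $p=\infty$ requires a slightly different argument, since no Jacobian weight appears. Here I would use that $\Psi^\omega$ is a bi-Lipschitz bijection between $\Omega^0$ and $\Omega^\omega$ mapping null sets to null sets, so the essential suprema of $|\sol|$ and $|\hat\sol|$ coincide, while the pointwise identity $\widehat{\nabla\sol}=(\nabla\Psi^\omega)^{-T}\nabla\hat\sol$ together with the bound $M$ again controls $\|\nabla\sol\|_{L^\infty}$ and $\|\nabla\hat\sol\|_{L^\infty}$ up to the factor $M$.

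I expect the computation itself to be routine; the only point requiring care is the uniformity of $c_n$. The estimates degenerate as $\|\omega\|_\Winf\to1^-$, because $\det\nabla\Psi^\omega=1+\omega$ may approach $0$ and $(\nabla\Psi^\omega)^{-1}$ then blows up, so $c_n$ genuinely depends on $\|\omega\|_\Winf$ through a lower bound for $1-\|\omega\|_\Winf$; it stays bounded precisely on balls of radius $\epsw<1$, such as the one used in the fixed-point argument. Keeping every constant expressed through $\|\omega\|_\Winf$ alone, and independent of $\sol$ and $p$, is therefore the essential bookkeeping: it is exactly what underlies the bound \eqref{eq:CA} and the ensuing equivalence between $\hat a(\cdot,\cdot;\omega)$ and $a^\omega(\cdot,\cdot)$.
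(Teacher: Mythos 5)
Your proof is correct and is exactly the standard change-of-variables argument the paper has in mind: the paper states \cref{lmm:normequiv} without proof, and your Jacobian computation, the $L^p$/gradient splitting, and the separate treatment of $p=\infty$ supply precisely the missing details. Your closing observation is also apt and slightly sharper than the paper's own statement: since $\det\nabla\Psi^\omega=1+\omega$ can degenerate as $\|\omega\|_{\Winf}\to 1^-$, the constant $c_n$ cannot be uniform over the whole open unit ball as the lemma literally asserts, but only over balls $\|\omega\|_{\Winf}\leq\epsw<1$, which is all that is ever used in the fixed-point argument.
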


The well-posedness of the problems \cref{eq:gammatilde,eq:soltilde} hinges upon the results of continuity and inf-sup stability of the forms $a^\omega,b,\hat a$ collected in the following statement.

\begin{lemma}\label{lmm:continfsup}
 There exists a constant $\hat\alpha>0$ such that the following inequalities hold for any $\omega\in\Wg$, $\gtest\in\Wc, \hat\sol\in\Wsolo, \hat\test\in\Wtesto, \sol\in\Wsol, \test\in\Wtest$:
 \begin{align}
  b(\omega,\gtest)&\leq|\omega|_{W^1_\infty}|\gtest|_{W^1_1}\leq\|\omega\|_{W^1_\infty}\|\gtest\|_{W^1_1},
    \label{eq:bcont}\phantom{\ref{eq:bcont}}\\
  \|\omega\|_{W^1_\infty}&\leq c_\infty^2\sup_{\gtest\in\Wc\setminus{0}}\frac{b(\omega,\gtest)}{\|\gtest\|_{W^1_1}},
    \label{eq:binfsup}\phantom{\ref{eq:binfsup}} \\
  \hat a(\hat\sol,\hat\test;\omega)&\leq C_A\|\hat\sol\|_{W^1_p}\|\hat\test\|_{W^1_q},
    \label{eq:hatacont}\phantom{\ref{eq:hatacont}}\\
  \|\hat\sol\|_{W^1_p}&\leq\hat\alpha\sup_{\hat\test\in\Wtestosmall\setminus{0}}\frac{\hat a(\hat\sol,\hat\test;\omega)}{\|\hat\test\|_{W^1_q}},
    \label{eq:hatainfsup}\phantom{\ref{eq:hatainfsup}}\\
  a^\omega(\sol,\test)&\leq|\sol|_{W^1_p}|\test|_{W^1_q} \leq \|\sol\|_{W^1_p}\|\test\|_{W^1_q},
    \label{eq:acont}\phantom{\ref{eq:acont}}\\
  \|\sol\|_{W^1_p}&\leq\hat\alpha c_n^2\sup_{\test\in\Wtestsmall\setminus{0}}\frac{a^\omega(\sol,\test)}{\|\test\|_{W^1_q}}.
    \label{eq:ainfsup}\phantom{\ref{eq:ainfsup}}
 \end{align}
\end{lemma}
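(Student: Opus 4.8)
The plan is to split the six estimates into the three continuity bounds \cref{eq:bcont,eq:hatacont,eq:acont} and the three inf--sup bounds \cref{eq:binfsup,eq:hatainfsup,eq:ainfsup}, the latter being where the real work lies. The continuity estimates are immediate from H\"older's inequality applied to the integral definitions: for $b$ I would use the exponent pair $(\infty,1)$, reading off $b(\omega,\gtest)\le\|\omega'\|_{L^\infty}\|\gtest'\|_{L^1}=|\omega|_{W^1_\infty}|\gtest|_{W^1_1}$, and for $a^\omega$ the pair $(p,q)$. For $\hat a$ I would first exploit \cref{eq:CA}: since each $A^\omega(x)$ is symmetric and positive definite, the pointwise Cauchy--Schwarz inequality gives $\nabla\hat\sol^{T}A^\omega\nabla\hat\test\le(\nabla\hat\sol^{T}A^\omega\nabla\hat\sol)^{1/2}(\nabla\hat\test^{T}A^\omega\nabla\hat\test)^{1/2}\le C_A|\nabla\hat\sol|\,|\nabla\hat\test|$, and then H\"older on $\Omega^0$ yields \cref{eq:hatacont}. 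In all three cases the seminorms are finally dominated by the full norms.

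The inf--sup bound \cref{eq:binfsup} for $b$ rests on $L^1$--$L^\infty$ duality together with the Poincar\'e inequalities of \cref{rmrk:poincare}. By Poincar\'e it is enough to bound $\|\omega'\|_{L^\infty}$ from below by the supremum. The key observation is that $\gtest\mapsto\gtest'$ maps $\Wc$ onto all of $L^1(0,1)$ --- every $L^1$ datum is the derivative of a $W^1_1$ primitive whose additive constant is fixed by the zero--average constraint --- so that $\sup_{\gtest\in\Wc}\int_0^1\omega'\gtest'/\|\gtest'\|_{L^1}=\|\omega'\|_{L^\infty}$, realized by testing against primitives of functions approximating $\sgn(\omega')$ near the essential supremum of $|\omega'|$. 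Replacing $\|\gtest'\|_{L^1}$ by $\|\gtest\|_{W^1_1}$ via Poincar\'e and combining the two Poincar\'e constants produces the stated factor $c_\infty^2$.

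The heart of the lemma, and the step I expect to be the main obstacle, is \cref{eq:hatainfsup}: the $W^1_p$ inf--sup stability of the pulled--back Dirichlet form, \emph{uniformly} in $\omega$. Through the Banach--Ne\v{c}as--Babu\v{s}ka framework this inequality is equivalent to an a priori estimate for the mixed boundary value problem $-\Div(A^\omega\nabla\hat\sol)=f$ on $\Omega^0$, with homogeneous Dirichlet data on $\Sigma_b\cup\Gamma^0$ and homogeneous Neumann data on $\Sigma^0$: if this operator is an isomorphism $\Wsolo\to(\Wtesto)'$ with inverse bounded by $\hat\alpha$, then $\|\hat\sol\|_{W^1_p}\le\hat\alpha\,\|\hat a(\hat\sol,\cdot\,;\omega)\|_{(\Wtesto)'}$, which is exactly \cref{eq:hatainfsup}. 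The coefficient matrix $A^\omega$ has Lipschitz entries and is uniformly elliptic over the admissible range of $\omega$, the upper bound being \cref{eq:CA} and the matching coercivity $\mathbf v^{T}A^\omega\mathbf v\ge c_A|\mathbf v|^2$ following from the same computation since $\det\nabla\Psi^\omega=1+\omega$ is bounded away from zero. I would therefore invoke a Meyers/Gr\"oger--type higher--integrability estimate for divergence--form elliptic operators with mixed boundary conditions on the Lipschitz domain $\Omega^0$: there is a $p_0>2$, depending only on $c_A$, $C_A$ and $\Omega^0$, such that for $2\le p<p_0$ the problem is uniquely solvable in $W^1_p$ with a constant $\hat\alpha$ controlled by the same data. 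Because $c_A,C_A$ are uniform in $\omega$, so is $\hat\alpha$; the confinement of $p$ to a right neighbourhood of $2$ (equivalently $q<2$) is consistent with \cref{lmm:extension}. The delicate issues here are precisely this uniformity in $\omega$ and the corner behaviour where the Dirichlet and Neumann parts of $\partial\Omega^0$ meet, which is what restricts the admissible range of $p$.

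Finally, \cref{eq:ainfsup} follows from \cref{eq:hatainfsup} with no further analysis, using the identity $\hat a(\hat\sol,\hat\test;\omega)=a^\omega(\sol,\test)$ and the norm equivalence of \cref{lmm:normequiv}. The latter gives $\|\sol\|_{W^1_p(\Omega^\omega)}\le c_n\|\hat\sol\|_{W^1_p(\Omega^0)}$ and $\|\hat\test\|_{W^1_q(\Omega^0)}\ge c_n^{-1}\|\test\|_{W^1_q(\Omega^\omega)}$, so transporting the supremum in \cref{eq:hatainfsup} from $\Omega^0$ to $\Omega^\omega$ multiplies the constant by $c_n$ twice and yields exactly the factor $\hat\alpha c_n^2$.
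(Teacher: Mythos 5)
Your proposal is correct and follows essentially the same route as the paper: Hölder/Cauchy--Schwarz for the three continuity bounds, the $L^1$--$L^\infty$ duality with zero-average primitives and Poincar\'e for \cref{eq:binfsup}, a Meyers-type higher-integrability result (uniform in $\omega$ thanks to the two-sided ellipticity bounds derived from \cref{eq:CA}) for \cref{eq:hatainfsup}, and the pull-back identity plus \cref{lmm:normequiv} for \cref{eq:acont,eq:ainfsup}. If anything, you are more explicit than the paper about the mixed Dirichlet--Neumann boundary conditions and the resulting restriction of $p$ to a right neighbourhood of $2$, which the paper leaves implicit in its citation of Meyers and only surfaces later through the exponent $P$ in \cref{th:contrazione}.
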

\begin{proof}
 Starting from the proof of \cref{eq:binfsup}, let us take a fixed $\omega\in\Wg$.
 Recalling \cref{rmrk:poincare} and noticing that $L^\infty(0,1)=(L^1(0,1))$, we have that
 \begin{equation}
  \|\omega\|_{W^1_\infty(0,1)} \leq c_\infty |\omega|_{W^1_\infty(0,1)} = c_\infty \sup_{f\in L^1(0,1)} \frac{\int_0^1\omega'f\,dt}{\|f\|_{L^1(0,1)}}.
 \end{equation}
 Now, since for any $f\in L^1(0,1)$ we can define a function $\gtest(x)=\int_0^x f\,dt-\int_0^1f\,dt$ such that $\chi\in\Wc$ and $\chi'=f$, we can write
 \begin{equation}\label{eq:infsupb}\begin{aligned}
  \|\omega\|_{W^1_\infty(0,1)} &\leq c_\infty \sup_{f\in L^1(0,1)} \frac{\int_0^1\omega'f\,dt}{\|f\|_{L^1(0,1)}}\leq c_\infty \sup_{\gtest\in\Wc}\frac{\int_0^1\omega'\gtest'\,dt}{|\gtest|_\Wone} \\
  &\leq c_\infty^2 \sup_{\gtest\in\Wc}\frac{b(\omega,\gtest)}{\|\gtest\|_\Wone},
 \end{aligned}\end{equation}
 that is exactly \cref{eq:binfsup}.
 
 Concerning the form $\hat a$, we observe that a possible expression for the constant $C_A$ defined in \cref{eq:CA} is
 \begin{equation}
  C_A = \max_{\|\omega\|_{W^1_\infty}}\|A^\omega\|_{L^\infty} = \max_{\|\omega\|_{W^1_\infty}} \max\left\{1+\|\omega\|_{L^\infty} ; \left\|\frac{1+(\omega')^2}{1+\omega}\right\|_{L^\infty}\right\}.
 \end{equation}
 With this definition, we can notice that any eigenvalue $\lambda$ of $A^\omega$ fulfills
 \begin{equation}
  \frac{1}{2C_A}\leq\lambda\leq 2C_A,
 \end{equation}
 and hence the results in \cite{Meyers} yield the existence of a suitable $\hat\alpha$ and the validity of \cref{eq:hatacont,eq:hatainfsup}.

 The proof concludes by noticing that the remaining inequalities can be proven by means of Cauchy-Schwarz inequality, \cref{lmm:normequiv} and the inequalities \cref{eq:binfsup,eq:hatacont,eq:hatainfsup} just demonstrated.
\end{proof}

Now, we can prove the well-posedness of the individual problems \cref{eq:gammatilde} and \cref{eq:soltilde}, that can be stated as in the following result.
\begin{proposition}\label{prp:T1T2}
    The solution maps $T_1:\mathbb B\to\Wg$ and $T_2:\mathbb B\to\Wsol$ defined in \cref{eq:fpmap} are injective and continuous.
\end{proposition}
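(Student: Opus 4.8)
The plan is to treat each of the two subproblems \cref{eq:gammatilde} and \cref{eq:soltilde} as a linear variational problem and to invoke the well-posedness result supplied by the continuity and inf-sup estimates of \cref{lmm:continfsup}. For the bulk problem \cref{eq:soltilde} the trial--test pair $\Wsol$--$\Wtest$ is reflexive (as $1<q\leq 2\leq p<\infty$), so the Banach--Ne\v{c}as--Babu\v{s}ka theorem applies directly; for the free-boundary problem \cref{eq:gammatilde} the pair $\Wg$--$\Wc$ is not reflexive, and well-posedness is instead obtained through the explicit duality $L^\infty(0,1)=(L^1(0,1))'$ already exploited in the proof of \cref{eq:binfsup}. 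In both cases injectivity is the uniqueness content of the argument and is immediate from the inf-sup bounds \cref{eq:binfsup} and \cref{eq:ainfsup}: a vanishing image forces the corresponding homogeneous solution to be zero, so the solution operators are one-to-one. Continuity of the maps, by contrast, is the genuinely technical part and rests on the a priori estimates together with a careful control of the dependence on $\omega$ through $\Psi^\omega$.

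For $T_1$ I would first check that, for fixed $(\omega,\sol)\in\mathbb B$, the right-hand side $\gtest\mapsto a^\omega(\sol,E^\omega\gtest)+\angolo\gtest(1)$ is a bounded functional on $\Wc$: the bilinear term is controlled by the continuity \cref{eq:acont} and the extension estimate \cref{eq:extension} of \cref{lmm:extension}, while the point value $\gtest(1)$ is dominated via the embedding $W^1_1(0,1)\hookrightarrow C^0([0,1])$. Solving \cref{eq:gammatilde} then amounts to representing this functional, viewed as a bounded functional of $\gtest'\in L^1(0,1)$, by an $L^\infty$ density, which yields a unique $\widetilde\omega\in\Wg$ together with the bound $\|\widetilde\omega\|_\Winf\leq c_\infty^2$ times the norm of the functional; hence $T_1$ is single valued. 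Continuity in $(\omega,\sol)$ follows by subtracting the identities \cref{eq:gammatilde} written for two data $(\omega_1,\sol_1)$ and $(\omega_2,\sol_2)$ and invoking \cref{eq:binfsup} once more: since $b$ does not depend on $\omega$, the estimate of $\|\widetilde\omega_1-\widetilde\omega_2\|_\Winf$ reduces to bounding $a^{\omega_1}(\sol_1,E^{\omega_1}\gtest)-a^{\omega_2}(\sol_2,E^{\omega_2}\gtest)$ uniformly over $\gtest\in\Wc$.

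For $T_2$ I would set $\widetilde\sol=g+\phi$ with $\phi\in\Wsol$, so that $\phi$ solves $a^{\widetilde\omega}(\phi,\test)=-a^{\widetilde\omega}(g,\test)$, the right-hand side being bounded by \cref{eq:acont}; existence and uniqueness of $\phi$, hence of $\widetilde\sol$, then follow from \cref{eq:acont} and \cref{eq:ainfsup} via the Banach--Ne\v{c}as--Babu\v{s}ka theorem. The delicate point is continuity with respect to $\widetilde\omega$, since $\widetilde\sol_1$ and $\widetilde\sol_2$ are defined on the distinct domains $\Omega^{\widetilde\omega_1}$ and $\Omega^{\widetilde\omega_2}$. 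To compare them I would pull both problems back to the fixed reference square $\Omega^0$ through $\Psi^{\widetilde\omega_i}$, work with the reference form $\hat a(\cdot,\cdot;\widetilde\omega_i)$, and use the norm equivalence of \cref{lmm:normequiv} together with the inf-sup \cref{eq:hatainfsup}. This recasts the comparison as an estimate of $\hat a(\hat\sol_1-\hat\sol_2,\hat\test;\widetilde\omega_1)$, in which the difference of coefficient matrices $A^{\widetilde\omega_1}-A^{\widetilde\omega_2}$ and the difference of lifted data $g\circ\Psi^{\widetilde\omega_1}-g\circ\Psi^{\widetilde\omega_2}$ are the only quantities left to control.

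The main obstacle is precisely this domain dependence. Overcoming it requires showing that $\omega\mapsto A^\omega$ and $\omega\mapsto g\circ\Psi^\omega$ are Lipschitz from the unit ball of $\Winf$ into $L^\infty(\Omega^0)$ and $W^1_q(\Omega^0)$, respectively. Here the explicit expression of $A^\omega$ and, crucially, the uniform constraint $\|\omega\|_\Winf\leq\epsw<1$ built into the definition of $\mathbb B$ in \cref{eq:fpset} play a decisive role, since they keep the factors $1/(1+\omega)$ and $1+(\omega')^2$ bounded away from their singularities and Lipschitz; the assumption $g\in W^1_\infty(\allhold)$ takes care of the data term. Combining these Lipschitz bounds with the uniform inf-sup constants $c_\infty,\hat\alpha$ and the $\mathbb B$-bounds on $(\omega,\sol)$ gives Lipschitz continuity of both $T_1$ and $T_2$, which together with the injectivity established above completes the proof of \cref{prp:T1T2}.
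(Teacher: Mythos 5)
Your proposal is correct and follows the same overall architecture as the paper: split into the two subproblems \cref{eq:gammatilde} and \cref{eq:soltilde}, get uniqueness from the inf-sup bounds \cref{eq:binfsup} and \cref{eq:ainfsup}, and handle the reflexive bulk problem with the Banach--Ne\v{c}as--Babu\v{s}ka theorem. The one place where you genuinely diverge is the existence argument for the non-reflexive $\omega$-subproblem: you represent the right-hand-side functional directly through the duality $L^\infty(0,1)=(L^1(0,1))'$, using that $\gtest\mapsto\gtest'$ is an isomorphism from $\Wc$ onto $L^1(0,1)$, which immediately produces $\widetilde\omega'\in L^\infty$ and hence $\widetilde\omega\in\Wg$. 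The paper instead first applies the Riesz representation theorem in the intermediate Hilbert space $\avW{1}{2}$ and then runs a density argument ($W^1_2$ dense in $W^1_1$) to extend the identity to all test functions in $\Wc$, recovering the $W^1_\infty$ bound a posteriori from \cref{eq:binfsup}. Your route is more direct and avoids the density step, at the price of having to verify the $L^1$-boundedness of the induced functional (which you do, via \cref{lmm:extension}, Poincar\'e, and the embedding $W^1_1(0,1)\hookrightarrow C^0([0,1])$ for the point value $\angolo\gtest(1)$); the paper's route leans on the Hilbert structure and keeps everything inside the variational framework. On continuity of $T_1,T_2$ you are in fact more explicit than the paper, which disposes of it in a single sentence (``stems from the continuity of the forms'') and effectively defers the quantitative Lipschitz estimates --- including the control of $A^{\omega_1}-A^{\omega_2}$ and of the pulled-back data on the fixed reference square --- to the proof of \cref{th:contrazione}; your sketch of those estimates, using $\|\omega\|_\Winf\leq\epsw<1$ to keep $1/(1+\omega)$ away from its singularity, is exactly what that later proof carries out.
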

\begin{proof}
  Employing the continuity and inf-sup inequalities for $a^\omega(\cdot,\cdot)$ and $b(\cdot,\cdot)$, stated in \cref{lmm:continfsup}, we can prove the existence and uniqueness of the solutions to problems \cref{eq:gammatilde} and \cref{eq:soltilde}, that is equivalent to the thesis.
  
  Starting with problem \cref{eq:gammatilde}, uniqueness comes directly from \cref{eq:binfsup}, whereas for existence some more steps are needed.
  Let $\varphi$ be the linear functional over $\Wc$ defined by the right-hand side of \cref{eq:gammatilde}, namely $\varphi(\gtest) = a^\omega(\sol, E^\omega\gtest) + \angolo\gtest(1)$.
  Being $a^\omega(\cdot,\cdot)$ continuous, $\varphi\in(\Wc)'\subset(\avW{1}{2})'$ and hence Riesz theorem implies the existence of a $\widetilde\omega\in\avW{1}{2}$ such that $b(\widetilde\omega,\gtest) = \varphi(\gtest)$ for any $\gtest\in\avW{1}{2}$.
  Now, it is enough to show that $\widetilde\omega$ actually belongs to $\Wg$ and it is the solution of problem \cref{eq:gammatilde}.
  We employ a density argument, like in \cite{ANS14}.
  Given a Cauchy sequence $\{\gtest_n\}_{n\in\mathbb{N}}$ in $\avW{1}{2}$, such a sequence is Cauchy also w.r.t.\ the full norm of $\Wc$, due to the continuous embedding $\avW{1}{2}\hookrightarrow\Wc$.
  Therefore, thanks to the continuity of $b(\widetilde\omega,\cdot)$ and $\varphi(\cdot)$, $\widetilde\omega$ fulfills \cref{eq:gammatilde} for a test function $\gtest$ given by the $\Wc$-limit of $\gtest_n$.
  Finally, being $W_2^1(0,1)$ dense in $W_1^1(0,1)$, a sequence $\{\gtest_n\}$ can be constructed for any $\gtest\in\Wc$, yielding that $\widetilde\omega$ is indeed the solution of \cref{eq:gammatilde}.
  The bound on $\|\widetilde\omega\|_{W^1_\infty(0,1)}$ required to state that $\widetilde\omega\in W^1_\infty(0,1)$ derives directly from the inf-sup inequality \cref{eq:binfsup}.
  
  Regarding problem \cref{eq:soltilde}, since $\Wsol,\Wtest$ are reflexive spaces, uniqueness comes from the application of Brezzi-Ne\v{c}as-Babu\v{s}ka theorem (see, e.g., \cite[Theorem~2.6]{ErnGuermond}), together with the inf-sup stability \cref{eq:ainfsup} of the form $a^\omega$.
  
  Eventually, the continuity of the maps $T_1,T_2$ stems from that of the forms $a^{\widetilde\omega}(\cdot,\cdot)$ and $b(\cdot,\cdot)$.
\end{proof}

We are now ready to state the main result for the existence of the solution to \cref{eq:weak}.
\begin{theorem}\label{th:contrazione}
 Let
 \begin{equation}\label{eq:hatfpset}
    \hat{\mathbb B} = \left\{(\omega,\hat\sol)\in \Wg\times\Wsolo \st \|\omega\|_\Winf\leq\epsw, \|\hat\sol\|_\Wpo\leq \varepsilon\right\}.
\end{equation}
Then, there exist $\overline{\angolo},\delta>0$ and $P>2$ such that, if $|\angolo|<\overline{\angolo}$, $\mathbf\|g\|_{W^1_p(\allhold)}<\delta$ for some $p\in(2,P)$, and $g\in W^2_s(\allhold)$ for some $s>2$, the map $\hat T:\hat{\mathbb B}\to\hat{\mathbb B}$ defined as $\hat T(\omega,\hat\test)=(T_1(\omega,\test),\hat{T_2(\omega,\test)})$ is a contraction w.r.t.\ the norm
 \begin{equation}
  \iii(\omega,\hat\sol)\iii = \varepsilon\|\omega\|_{\Winf} + \epsw\|\hat\sol\|_{W^1_p(\Omega^0)},
 \end{equation}
 for $\varepsilon$ and $\epsw$ sufficiently small.
\end{theorem}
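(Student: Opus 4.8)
The plan is to verify the two hypotheses of the Banach fixed-point theorem on the closed set $\hat{\mathbb B}$: that $\hat T$ maps $\hat{\mathbb B}$ into itself, and that it contracts distances in the weighted norm $\iii\cdot\iii$. Well-posedness of the two sub-problems defining $T_1$ and $T_2$ is already guaranteed by \cref{prp:T1T2}; the work lies in the quantitative estimates. The recurring device is to transport every quantity to the reference square $\Omega^0$ via the maps $\Psi^\omega$, so that the two bulk solutions produced by $T_2$—which a priori live on the \emph{different} domains $\Omega^{\widetilde\omega_1}$ and $\Omega^{\widetilde\omega_2}$—become comparable through the single family of forms $\hat a(\cdot,\cdot;\omega)$. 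All estimates rest on the continuity and inf-sup bounds of \cref{lmm:continfsup}, the extension bound \cref{eq:extension}, the norm equivalence of \cref{lmm:normequiv}, and the Lipschitz dependence on $\omega$ of both the coefficient matrix $A^\omega$ and the pulled-back datum $g\circ\Psi^\omega$.

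First I would prove the self-mapping property. For the free-boundary component, inserting $\widetilde\omega=T_1(\omega,\sol)$ into the inf-sup bound \cref{eq:binfsup} and estimating the right-hand side of \cref{eq:gammatilde} through the continuity \cref{eq:acont} of $a^\omega$, the extension bound \cref{eq:extension}, and the embedding $|\gtest(1)|\leq\|\gtest\|_{W^1_1(0,1)}$, yields $\|\widetilde\omega\|_{\Winf}\leq C(\|\sol\|_{\Wp}+|\angolo|)$. For the bulk component, the homogeneous part $\hat{\widetilde\sol}-\hat g\in\Wsolo$ solves a problem with datum $-\hat a(\hat g,\cdot;\widetilde\omega)$, so the inf-sup bound \cref{eq:hatainfsup} together with the continuity \cref{eq:hatacont} gives $\|\hat{\widetilde\sol}-\hat g\|_{\Wpo}\leq C\|g\|_{W^1_p(\allhold)}$. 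Choosing $\varepsilon$ comparable to $\delta$ and then $\epsw$ below $1$ so as to absorb the residual $\varepsilon$- and $|\angolo|$-contributions, the smallness hypotheses $|\angolo|<\overline\angolo$ and $\|g\|_{W^1_p(\allhold)}<\delta$ force $\hat T(\hat{\mathbb B})\subseteq\hat{\mathbb B}$. The admissible range $p\in(2,P)$ is dictated by the constraint $q<2$ in \cref{lmm:extension} and by the Meyers exponent underlying \cref{eq:hatainfsup}.

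For the contraction, I would take two points of $\hat{\mathbb B}$ and set $\widetilde\omega_i=T_1(\omega_i,\sol_i)$. In the difference of the two free-boundary equations the angle terms $\angolo\gtest(1)$ cancel and, crucially, the pulled-back extension $\widehat{E^\omega\gtest}=\hat E\gtest$ is \emph{independent} of $\omega$ by the construction in \cref{lmm:extension}; hence $b(\widetilde\omega_1-\widetilde\omega_2,\gtest)$ reduces to an integral over $\Omega^0$ against $\nabla\hat E\gtest$, in which $A^{\omega_1}$ multiplies $\nabla(\hat\sol_1-\hat\sol_2)$ while the increment $A^{\omega_1}-A^{\omega_2}$ multiplies $\nabla\hat\sol_2$. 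Combining this with \cref{eq:binfsup}, the Lipschitz bound $\|A^{\omega_1}-A^{\omega_2}\|_{L^\infty}\leq L_A\|\omega_1-\omega_2\|_{\Winf}$, and $\|\hat\sol_2\|_{\Wpo}\leq\varepsilon$ produces $\|\widetilde\omega_1-\widetilde\omega_2\|_{\Winf}\leq K_1\|\hat\sol_1-\hat\sol_2\|_{\Wpo}+K_2\|\omega_1-\omega_2\|_{\Winf}$, with $K_2$ small for small radii and data.

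The delicate step, and the main obstacle, is the bulk contraction. Writing $z=\hat{\widetilde\sol}_1-\hat{\widetilde\sol}_2$ one has $z-(\hat g_1-\hat g_2)\in\Wsolo$, and testing the two pulled-back bulk equations against a common $\hat\test\in\Wtestosmall$ gives $\hat a(z,\hat\test;\widetilde\omega_1)=-\int_{\Omega^0}\nabla\hat{\widetilde\sol}_2^{\,T}(A^{\widetilde\omega_1}-A^{\widetilde\omega_2})\nabla\hat\test$, so both the coefficient mismatch and the mismatch of the boundary data $\hat g_i=g\circ\Psi^{\widetilde\omega_i}$ act as forcing. Splitting off the boundary data and applying \cref{eq:hatainfsup} to the homogeneous remainder bounds $\|z\|_{\Wpo}$ by $\|A^{\widetilde\omega_1}-A^{\widetilde\omega_2}\|_{L^\infty}\|\hat{\widetilde\sol}_2\|_{\Wpo}+C\|\hat g_1-\hat g_2\|_{\Wpo}$, each factor in turn controlled by $\|\widetilde\omega_1-\widetilde\omega_2\|_{\Winf}$. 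The estimate $\|\hat g_1-\hat g_2\|_{\Wpo}\leq C\|g\|_{W^2_s(\allhold)}\|\widetilde\omega_1-\widetilde\omega_2\|_{\Winf}$ is precisely where the regularity $g\in W^2_s$, $s>2$, is indispensable: since $\hat g_i(\xi,\eta)=g(\xi,(1+\widetilde\omega_i(\xi))\eta)$, a fundamental-theorem-of-calculus argument shows that controlling $\nabla(\hat g_1-\hat g_2)$ in $L^p$ costs one derivative more than $g\in W^1_p$ alone provides, and membership in $W^2_s$ secures \emph{Lipschitz} (rather than merely H\"older) dependence on $\omega$. Chaining this bulk bound with the free-boundary bound of the previous step and measuring the outcome in $\iii\cdot\iii$ leaves a coupled system whose off-diagonal terms carry the fixed factor $K_1$ and the data-dependent Lipschitz constants; the weights $\varepsilon$ and $\epsw$ are chosen exactly to balance these cross-terms, and for sufficiently small radii and sufficiently small data the resulting contraction factor falls below $1$. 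Closing this $2\times 2$ coupling below unity—rather than either inequality in isolation—is the crux of the argument.
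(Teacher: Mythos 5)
Your proposal is correct and follows essentially the same route as the paper: self-mapping via the inf-sup bounds \cref{eq:binfsup,eq:hatainfsup} combined with continuity and the extension estimate, and contraction by pulling everything back to $\Omega^0$, splitting the difference of the bulk equations into a coefficient-mismatch term $(A^{\widetilde\omega_1}-A^{\widetilde\omega_2})\nabla\hat{\widetilde\sol}_2$ plus the boundary-data mismatch $\hat g_1-\hat g_2$ controlled through $g\in W^2_s(\allhold)$, and finally balancing the resulting $2\times2$ system with the weights $\varepsilon,\epsw$ in $\iii\cdot\iii$. Your explicit decomposition for the free-boundary contraction is the content of the step the paper delegates to \cite[Theorem 2.1]{SS91}, so no substantive difference remains.
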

\begin{proof}
 At first, we are going to show that the image of $\hat{\mathbb B}$ through the map $\hat T$ is indeed contained in $\hat{\mathbb B}$.
 Let $\widetilde\omega=T_1(\omega,\sol), \widetilde\sol=T_2(\widetilde\omega,\sol)$.
 From \cref{eq:binfsup}, the expression of problem \cref{eq:gammatilde} and the continuity \cref{eq:hatacont} of the form $\hat a$, we have that
 \begin{align}
  \|\widetilde\omega\|_{W^1_\infty(0,1)} &\leq c_\infty^2 \sup_{\chi\in\Wc\setminus\{0\}}\frac{b(\omega,\chi)}{\|\chi\|_{W^1_1(0,1)}} = c_\infty^2\sup_{\chi\in\Wc\setminus\{0\}}\frac{\hat a(\hat\sol,\hat E\chi;\omega) + \angolo\chi(1)}{\|\chi\|_{W^1_1(0,1)}} \\
  &\leq C_A\hat c_0\|\hat\sol\|_{W^1_p(\Omega^0)}+|\angolo| \leq C_A\hat c_0\varepsilon+\overline{\angolo},
 \end{align}
 whence $(\widetilde\omega,\hat\sol)\in\hat{\mathbb B}$ if $\varepsilon,\overline{\angolo}$ are chosen in such a way that $C_Ac_0\varepsilon+\overline{\angolo}<\epsw$.
 Analogous arguments yield
 \begin{align}
  \|\hat{\widetilde\sol}\|_{W^1_p(\Omega^0)} &\leq \|\hat g\|_{W^1_p(\Omega^0)} + \hat\alpha \sup_{\hat\test\in\Wtestosmall\setminus\{0\}}\frac{\hat a(\hat{\widetilde\sol}-\hat g,\hat\test;\widetilde\omega)}{\|\hat\test\|_{W^1_q(\Omega^0)}} \\
  &= \|\hat g\|_{W^1_p(\Omega^0)} + \hat\alpha \sup_{\hat\test\in\Wtestosmall\setminus\{0\}}\frac{-\hat a(\hat g,\hat\test;\widetilde\omega)}{\|\hat\test\|_{W^1_q(\Omega^0)}} \leq (1+\hat\alpha C_A)\|\hat g\|_{W^1_p(\Omega^0)},
 \end{align}
and hence the final solution $\hat T(\omega,\hat\sol)\in\hat{\mathbb B}$, as long as $\delta<(1+\hat\alpha C_A)^{-1}\varepsilon$.

 Now, in order to show that $\hat T$ is a contraction map, we introduce $\widetilde\omega_i=T_1(\omega_i,\sol_i)$ and $\widetilde\sol_i=T_2(\widetilde\omega_i,\sol_i)$, where $(\omega_i,\sol_i), i=1,2,$ are given elements of $\mathbb B$.
 Following the proof of \cite[Theorem 2.1]{SS91}, one can show that
 \begin{equation}\label{eq:deltagamma}\begin{aligned}
  \|\widetilde\omega_1-\widetilde\omega_2\|_{W^1_\infty(0,1)} &\leq c_\infty^2\hat c_0\max\left\{C_A;c\frac{1+\epsw}{1-\epsw}\right\}\\
  &\qquad\cdot\left(\varepsilon\|\omega_1-\omega_2\|_{W^1_\infty(0,1)}+\epsw\|\hat\sol_1-\hat\sol_2\|_{W^1_p(\Omega^0)}\right)\\
  &= c_\infty^2\hat c_0\max\left\{C_A;c\frac{1+\epsw}{1-\epsw}\right\}\iii(\omega_1-\omega_2, \hat\sol_1-\hat\sol_2)\iii,
 \end{aligned}\end{equation}
 where $\hat\sol_i$ is the preimage of $\sol_i$ via the map $\Psi^{\omega_i}:\Omega^0\to\Omega^{\omega_i}$.
 In order to control $\|\hat{\widetilde\sol_1}-\hat{\widetilde\sol_2}\|_{W^1_p(\Omega^0)}$, instead, some more steps are due: indeed, the difference $\hat{\widetilde\sol_1}-\hat{\widetilde\sol_2}$ does not belong to $\Wsol$, since it is equal to $\hat g_1-\hat g_2$ on $\Gamma^0$, where $\hat g_1,\hat g_2$ are different preimages of the Dirichlet datum $g$ via the maps induced by $\widetilde\omega_1,\widetilde\omega_2$, respectively.
 Employing the triangle inequality and the inf-sup condition \cref{eq:hatainfsup} of the form $\hat a(\cdot,\cdot;\widetilde\omega_1)$ gives
 \begin{equation}\label{eq:deltauhat1}\begin{aligned}
  \|\hat{\widetilde\sol_1}-\hat{\widetilde\sol_2}\|_{W^1_p(\Omega^0)}&\leq \|\hat g_1-\hat g_2\|_{W^1_p(\Omega^0)} \\ &\qquad+\hat\alpha\sup_{\hat\test\in\Wtestosmall\setminus\{0\}}\frac{\hat a\left(\hat{\widetilde\sol_1}-\hat g_1 - \hat{\widetilde\sol_2}+\hat g_2,\hat\test;\widetilde\omega_1\right)}{\|\hat\test\|_{W^1_q(\Omega^0)}}.
 \end{aligned}\end{equation}
 Noticing that
 \begin{equation}
  \hat a(\hat{\widetilde\sol_1},\hat\test;\widetilde\omega_1) = \hat a(\hat{\widetilde\sol_2},\hat\test;\widetilde\omega_2) = 0\qquad \forall\hat\test\in\Wtesto,
 \end{equation}
 we can bound the second term of \cref{eq:deltauhat1} as follows:
 \begin{align}
  \hat a\left(\hat{\widetilde\sol_1}-\hat g_1 - \hat{\widetilde\sol_2}+\hat g_2,\hat\test;\widetilde\omega_1\right) &= \hat a(\hat g_2-\hat g_1,\hat\test;\widetilde\omega_1) - \hat a(\hat{\widetilde\sol_2},\hat\test;\widetilde\omega_1) \\
  &=\hat a(\hat g_2-\hat g_1,\hat\test;\widetilde\omega_1) + \hat a(\hat{\widetilde\sol_2},\hat\test;\widetilde\omega_2) - \hat a(\hat{\widetilde\sol_2},\hat\test;\widetilde\omega_1) \\
  &\leq C_A\|\hat g_1-\hat g_2\|_{W^1_p(\Omega^0)}\|\hat\test\|_{W^1_q(\Omega^0)} \\
  &\qquad+ \frac{1+\epsw}{1-\epsw}\varepsilon\|\widetilde\omega_1-\widetilde\omega_2\|_{\Winf}\|\hat\test\|_{W^1_p(\Omega^0)}.
 \end{align}
 Thanks to the assumption $g\in W^2_s(\allhold)$, the difference between the two preimages of this function can be controlled in terms of the difference in the maps:
 \begin{equation}
  \|\hat g_1-\hat g_2\|_{W^1_p(\Omega^0)}\leq C_g\|g\|_{W^2_s(\allhold)}\|\widetilde\omega_1-\widetilde\omega_2\|_{\Winf}^{1-2/s} \leq C_g\|g\|_{W^2_s(\allhold)}\|\widetilde\omega_1-\widetilde\omega_2\|_{\Winf}.
 \end{equation}
 Therefore, we can conclude that
 \begin{equation}\label{eq:deltauhat}
  \|\hat{\widetilde\sol_1}-\hat{\widetilde\sol_2}\|_{W^1_p(\Omega^0)}\leq \left[(1+\hat\alpha C_A)C_g\|g\|_{W^2_s(\allhold)}+\hat\alpha\frac{1+\epsw}{1-\epsw}\varepsilon\right]\|\widetilde\omega_1-\widetilde\omega_2\|_{\Winf}.
 \end{equation}
 Eventually, merging \cref{eq:deltagamma} and \cref{eq:deltauhat} yields the thesis, provided that
 \begin{gather}
  \|g\|_{W_s^2(\allhold)} < \delta < (1+\hat\alpha C_A)^{-1}\varepsilon, \label{eq:dummy1}\phantom{\ref{eq:dummy1}}\\
  (1+\hat\alpha C_A)C_g\epsw\delta + \hat\alpha\varepsilon\epsw\frac{1+\epsw}{1-\epsw} + c_\infty^2\hat c_0\max\left\{C_A;c\frac{1+\epsw}{1-\epsw}\right\}\varepsilon < 1,\label{eq:dummy2}\phantom{\ref{eq:dummy2}}\\
  \overline\angolo < \epsw - C_A\hat c_0\varepsilon.\label{eq:psibar}
 \end{gather}

\end{proof}

\begin{rmrk}
  The last part of the proof of \cref{th:contrazione} requires, among other bounds, a restriction on the admissible steepness $\angolo$.
  In particular, an interpretation of inequality \cref{eq:psibar} is that the limitation on the angle comes from a trade-off between the bound $\varepsilon$ on the bulk solution and the bound $\epsw$ on the free-boundary function.
  Appropriately balancing this trade-off, we can obtain different bounds on $\angolo$, any of which entails $\overline\angolo<1$.
  Anyway, this latter limitation is not much restrictive, since it allows $\angolo$ to range approximately in $(65^\circ, 115^\circ)$: many fluid dynamics applications actually involve contact angles that lie in this range \cite{MovingCL, Yamamoto}.
\end{rmrk}

Thanks to the equivalence of norms stated in \cref{lmm:normequiv}, the following result is a direct consequence of \cref{th:contrazione}.

\begin{corollary}\label{cor:wellposedness}
 If $\|g\|_{W^2_s(\allhold)}$ and $\angolo$ are sufficiently small, then for any given $p\in(2,P)$ problem \cref{eq:weak} admits a unique solution $(\omega,\sol)\in\mathbb B$, that can be obtained by fixed point iterations, starting with any initial guess $(\omega^{(0)},\sol^{(0)})\in\mathbb B$.
\end{corollary}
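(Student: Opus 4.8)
The plan is to read \cref{cor:wellposedness} off \cref{th:contrazione}: since $\hat T$ has already been shown to be a self-map of $\hat{\mathbb B}$ and a contraction for $\iii\cdot\iii$, it suffices to invoke the Banach contraction-mapping principle and then transport the resulting fixed point from the reference square $\Omega^0$ to the physical domain $\Omega^\omega$ by means of the change of variables $\Psi^\omega$ and the norm equivalence of \cref{lmm:normequiv}.

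First I would check that $\hat{\mathbb B}$, endowed with the distance induced by $\iii\cdot\iii$, is a complete metric space. The set $\hat{\mathbb B}$ is cut out of the Banach space $\Wg\times\Wsolo$ by the two closed conditions $\|\omega\|_\Winf\leq\epsw$ and $\|\hat\sol\|_\Wpo\leq\varepsilon$, hence it is closed; and because $\iii\cdot\iii$ differs from the canonical product norm only by the fixed positive factors $\varepsilon,\epsw$, the two are equivalent, so closedness and therefore completeness are preserved. Granting the smallness hypotheses on $\|g\|_{W^2_s(\allhold)}$ and $\angolo$, \cref{th:contrazione} shows that $\hat T\colon\hat{\mathbb B}\to\hat{\mathbb B}$ is a $\iii\cdot\iii$-contraction, and the contraction-mapping theorem delivers a unique fixed point $(\omega,\hat\sol)\in\hat{\mathbb B}$, realized as the $\iii\cdot\iii$-limit of the iterates $\hat T^n(\omega^{(0)},\hat\sol^{(0)})$ for every initial guess in $\hat{\mathbb B}$.

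It then remains to return to the physical configuration and identify the fixed point with a solution of \cref{eq:weak}. For each admissible $\omega$ the pushforward along $\Psi^\omega$ is a continuous linear bijection (with continuous inverse) between $\Wsolo$ and $\Wsol$, preserving the homogeneous conditions on $\Gamma^\omega$ and $\Sigma_b$; by \cref{lmm:normequiv} it carries $\hat{\mathbb B}$ onto $\mathbb B$ up to the equivalence constant $c_n$, and by construction it intertwines $\hat T$ with the solution map $T$ of \cref{eq:fpmap}. The fixed point of $\hat T$ thus corresponds to a pair $(\omega,\sol)\in\mathbb B$ with $T(\omega,\sol)=(\omega,\sol)$. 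Unwinding \cref{eq:fpmap}, the identity $\widetilde\omega=\omega$ is exactly the second line of \cref{eq:weak} via \cref{eq:gammatilde}, while $\widetilde\sol=\sol$ is the first line via \cref{eq:soltilde}; conversely any solution of \cref{eq:weak} lying in $\mathbb B$ is a fixed point of $T$. Hence existence, uniqueness and the convergence of the iterations all transfer from $\hat{\mathbb B}$ to $\mathbb B$.

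I expect the only genuinely delicate point to be this last transfer rather than the abstract fixed-point step. The iterates $(\omega^{(n)},\sol^{(n)})$ live on the moving domains $\Omega^{\omega^{(n)}}$, so convergence of the bulk component is naturally expressed only after pulling everything back to the fixed square $\Omega^0$; one must verify that the $\omega$-dependent identification is a genuine bijection intertwining $T$ and $\hat T$, and that the Dirichlet lift $\sol-g$ is handled consistently under $\Psi^\omega$ (the datum $g$ having different preimages for different $\omega$, exactly as already exploited in the proof of \cref{th:contrazione}). Once \cref{lmm:normequiv} is in force these checks are routine, which is precisely why the statement is a direct corollary.
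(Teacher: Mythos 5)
Your proposal is correct and follows exactly the route the paper intends: the paper states \cref{cor:wellposedness} without proof as a ``direct consequence'' of \cref{th:contrazione} via the norm equivalence of \cref{lmm:normequiv}, and your write-up simply makes explicit the standard steps (completeness of $\hat{\mathbb B}$, Banach fixed-point theorem, transport back to $\Omega^\omega$ and identification of fixed points of $T$ with solutions of \cref{eq:weak}) that the paper leaves to the reader.
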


\begin{rmrk}
 The statement of \cref{cor:wellposedness}, as well as all the previous results, still hold if a non-homogeneous bulk equation,
 \begin{equation}
  -\Delta\sol = f\qquad\text{ in }\Omega,
 \end{equation}
 is considered, provided that $\|f\|_{(\Wtestsmall)'}$ is sufficiently small.
\end{rmrk}

%%%%%%%%%%%%%%%%%%%%%%%%%%%%%%%%%%%%%%%%%%%%%%%%%%%%%%%%%%%%%%%%%%%%%%%%%%%%%%%%
%%%%%%%%%%%%%%%%%%%%%%%%%%%%%%%%%%%%%%%%%%%%%%%%%%%%%%%%%%%%%%%%%%%%%%%%%%%%%%%%
\section{The discrete problem}
\label{sec:discrete}
%%%%%%%%%%%%%%%%%%%%%%%%%%%%%%%%%%%%%%%%%%%%%%%%%%%%%%%%%%%%%%%%%%%%%%%%%%%%%%%%
%%%%%%%%%%%%%%%%%%%%%%%%%%%%%%%%%%%%%%%%%%%%%%%%%%%%%%%%%%%%%%%%%%%%%%%%%%%%%%%%

Let us introduce a triangulation $\mathcal T_h^0$ for the domain $\Omega^0$, with a discretization step $h$, and denote by $\{\mathbf n_k=(\xi_k,\eta_k)\}_{k=1}^{N_h}$ the nodes of this mesh, with the first $N_\Gamma+1$ nodes lying on $\Gamma^0$ and ordered from left to right.
On $\mathcal T_h^0$, we set up a conforming finite element space
\begin{equation}
 \Vo_{,0} = \{\test_h\in C^0(\overline{\Omega^0}) \st \test_h|_K\in\mathbb P_1(K)\ \forall K\in\mathcal T_h^0, \text{ and }\test_h|_{\Gamma^0\cup\Sigma_b}=0\}
\end{equation}
 of piecewise linear functions with zero trace on $\Gamma^0\cup\Sigma_b$.
Considering the first coordinate of the points of the mesh $\mathcal T_h$ lying on $\Gamma^0$, we denote by $\mathcal S_h=\{[\xi_k,\xi_{k+1}]\}_{k=1}^{N_\Gamma}$  the corresponding one-dimensional grid for the interval $[0,1]$.
On this second mesh, we introduce the finite element space $\So$ of zero-mean piecewise linear functions:
\begin{equation}
 \So = \left\{\gtest_h\in C^0([0,1]) \left\vert \gtest_h|_{[\xi_k,\xi_{k+1}]}\in\mathbb P_1([\xi_k.\xi_{k+1}])\ \forall k=1,\dots,N_\Gamma, \text{ and} \int_0^1\gtest_h=0\right.\right\}.
\end{equation}

Given an element $\omega_h$ of this space, the domain $\Omega^0$ can be transformed into a domain $\Omega^{\omega_h}$ via a piecewise linear map $\Psi_h^{\omega_h}$, and the space $\Vo_{,0}$ is mapped to an other piecewise-linear finite element space $\Vo$ on the new domain.
In these settings, the classical finite element formulation for problem \cref{eq:weak} reads as follows:\\
Find $(\omega_h,\sol_h-g_h)\in\So\times\Vo$ such that
\begin{equation}\label{eq:FE}\left\{\begin{aligned}
 a^{\omega_h}(\sol_h,\test_h) &= 0&&\forall\test_h\in\Vo,\\
 b(\omega_h,\gtest_h) &= a^{\omega_h}(\sol_h,E_h^{\omega_h}\gtest_h) + \angolo\gtest_h(1) &&\forall\gtest_h\in\So,
\end{aligned}\right.\end{equation}
where $g_h$ is the piecewise linear interpolation of the Dirichlet datum $g$.

As for the continuous problem, the discrete problem \cref{eq:FE} requires a proper definition of a lifting operator $E_h^{\omega_h}:\So\to\Vo$.
For the problem at hand, we can simply define it as
\begin{equation}
 E_h^{\omega_h}\chi_h = (J_h\hat E\gtest_h) \circ \Psi_h^{\omega_h},
\end{equation}
where $J_h: H^1(\Omega^0)\to\Vo$ is the classical Cl\'ement interpolator \cite{QV}.
It is worth remarking that, differently from \cite{SS91}, one can not consider a discrete extension $E_h\gtest_h$ having support on the only upper side $\Gamma^{\omega_h}$, because it would spoil the nullity of the difference $\hat E\gtest_h-J_h\hat E\gtest_h$ on the lateral boundary $\Sigma^0$: this subject will be better discussed in \cref{rmrk:Eo}.

In order to prove the well-posedness of problem \cref{eq:FE}, as well as the stability and convergence properties of the approximation, we need to show that the forms $a^{\omega_h}$ and $b$ are inf-sup stable also in the discrete spaces, and that the functional $\gtest_h\mapsto a^{\omega_h}(\sol_h,E_h^{\omega_h}\gtest_h)$ is continuous.
To this aim, two main conditions are required:

\begin{enumerate}
 \item\label{it:EWo} $\hat E\chi_h-J_h\hat E\chi_h\in\Wtesto$, where $\hat E$ is defined as in the proof of \cref{lmm:extension}, that is, this difference is an admissible test function for the continuous problem on the domain $\Omega^0$;
 \item\label{it:Riesz} the Riesz projection operator $R_h:\zWo{1}{2}\to\Vo_{,0}$, defined as the solution operator of problem
\begin{equation}\label{eq:Riesz}
 \int_{\Omega^0}\nabla R_h\sol\cdot\nabla\test_h = \int_{\Omega^0}\nabla\sol\cdot\nabla\test_h,\qquad\forall\test_h\in \Vo_{,0},
\end{equation}
is stable in $W^1_p(\Omega^0)$ for any $p\in[1,\infty)$, namely
\begin{equation}\label{eq:Rieszstable}
 \exists C_R>0\quad\text{such that}\quad\|R_h\sol\|_{W^1_p(\Omega^0)}\leq C_R\|\sol\|_{W^1_p(\Omega^0)},\quad\forall\sol\in\Wsolo.
\end{equation}
\end{enumerate}

To prove condition \ref{it:EWo}, we observe that any discrete test function $\gtest_h\in\So$ belongs to $W^1_1(0,1)$, thus the proof of \cref{lmm:extension} can be followed.
Therefore, $\hat E\gtest_h\in\Wtesto$ and, since also piecewise polynomials belong to $W^1_q(\Omega^0)$, the difference $\hat E\gtest_h-J_h\hat E\gtest_h$ is in $\zWo{1}{q}$.
Concerning the second condition, some more work is needed, in order to deal with mixed boundary conditions: this discussion is postponed to \cref{subsec:Riesz}.

Under the above conditions, the proofs of \cite[Proposition 3.3]{SS91} and of all the consequent results therein can be followed without any modifications: in those results, the role of having fully Dirichlet boundary conditions is to provide Poincar\'e inequality and the stability of the Riesz projection, both of which still hold for our spaces $\zWo{1}{p},\zW{1}{p},\Vo$.
Thus, we can state the following collective result:

\clearpage
\begin{theorem}
\label{th:discr}
 \ 
 \begin{enumerate}[(i)]
  \item\label{it:discrcontrazione} Under the hypotheses of \cref{th:contrazione}, the discrete problem \cref{eq:FE} admits a unique solution $(\omega_h,\sol_h)$ in
  \begin{equation}
   \mathbb B_h = \mathbb B\cap(\Vo\times\So),
  \end{equation}
  which can be computed by fixed point iterations like in the continuous case, starting from any $(\omega_h^0,\sol_h^0)\in\mathbb B_h$.
  
\item\label{it:convergence} If $\varepsilon$ and $\epsw$ are sufficiently small, and the solution $(\omega,\sol)\in\mathbb B$ of the continuous problem belongs to $W^2_\infty(0,1)\times W^2_p(\Omega^\omega)$ for some $p>2$, then there are two constants $C,h_0\in(0,\infty)$ such that, for any $h\in(0,h_0]$,
  \begin{equation}
   \|\omega-\omega_h\|_{W^1_\infty(0,1)}+\|\sol\circ\Psi^\omega - \sol_h\circ\Psi_h^{\omega_h}\|_{W^1_p(\Omega^0)} \leq Ch(\|\omega\|_{W^2_\infty(0,1)}+\|\sol\|_{W^2_p(\Omega^\omega)}).
  \end{equation}
 \end{enumerate}
\end{theorem}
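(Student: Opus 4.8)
The plan is to reduce the whole statement to the two structural conditions \ref{it:EWo}--\ref{it:Riesz} stated just above the theorem, and then to transport the continuous arguments of \cref{prp:T1T2,th:contrazione} to the discrete spaces essentially verbatim, as in \cite{SS91}. The key observation, already made by the authors, is that the fully-Dirichlet hypothesis enters the Shopov--Schaeffer machinery only through a Poincar\'e inequality and the $W^1_p$-stability of the Riesz projection. The former holds on $\zWo{1}{p}$ and on $\So$ by \cref{rmrk:poincare}; the latter is precisely condition \ref{it:Riesz}, whose proof under mixed boundary conditions is the content of \cref{subsec:Riesz}. Condition \ref{it:EWo}, securing that the discrete lifting produces admissible continuous test functions, has already been checked. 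Once both conditions are in force, the argument is structural.

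For the well-posedness part \ref{it:discrcontrazione}, I would first establish the three discrete ingredients that the contraction proof needs. \emph{(a)} A discrete $W^1_p$--$W^1_q$ inf-sup inequality for $\hat a(\cdot,\cdot;\omega_h)$ on $\Vo_{,0}\times\Vo_{,0}$: this follows from the continuous inf-sup \cref{eq:hatainfsup} and the Riesz stability \cref{eq:Rieszstable} by the standard duality argument, in which the continuous maximizing test function is replaced by its (stable) Riesz projection at the cost of the constant $C_R$. \emph{(b)} A discrete inf-sup for $b$ on $\So$: this is immediate, since the primitive construction of \cref{eq:infsupb} can be performed within $\So$, the derivatives being piecewise constant. \emph{(c)} Continuity of the functional $\gtest_h\mapsto a^{\omega_h}(\sol_h,E_h^{\omega_h}\gtest_h)$, which follows from condition \ref{it:EWo}, the continuity \cref{eq:acont} of $a^\omega$, the extension bound \cref{eq:extension}, and the $W^1_q$-stability of the Cl\'ement interpolator. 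With \emph{(a)}--\emph{(c)} in hand, the discrete analogues of \cref{prp:T1T2,th:contrazione} hold on $\hat{\mathbb B}\cap(\So\times\Vo_{,0})$ with the same constants, so the discrete map $\hat T_h$ is a contraction in $\iii\cdot\iii$ for the same $\varepsilon,\epsw$, and the Banach fixed-point theorem yields the unique solution in $\mathbb B_h$ together with convergence of the iterates from any starting guess.

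For the convergence estimate \ref{it:convergence}, I would follow the C\'ea/Strang strategy behind \cite[Proposition~3.3]{SS91} and its corollaries. Writing the error in the weighted norm $\iii\cdot\iii$ (equivalent to the sum of norms in the statement), I split it into a best-approximation part and a discrete part; the contraction structure just established bounds the discrete part by the consistency error plus the interpolation error. For piecewise linears, the interpolation errors of $\omega\in W^2_\infty(0,1)$ and of $\sol\circ\Psi^\omega\in W^2_p(\Omega^0)$ are $O(h)$ times the corresponding $W^2$ seminorms, by one- and two-dimensional interpolation theory; the geometric consistency terms produced by replacing $\Psi^\omega$ with the piecewise linear $\Psi_h^{\omega_h}$ and $g$ with $g_h$ are likewise $O(h)$, using $g\in W^2_s(\allhold)$, the Lipschitz dependence of $A^\omega$ on $\omega$, and \cref{lmm:normequiv}. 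Absorbing the discrete contribution through the contraction constant $<1$ then gives the stated linear rate.

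The main obstacle is ingredient \emph{(a)}, i.e.\ the discrete inf-sup stability of $\hat a(\cdot,\cdot;\omega_h)$, which rests entirely on the $W^1_p$-stability of the Riesz projection; this is exactly the place where the fully-Dirichlet theory of \cite{RannacherScott} must be upgraded to the mixed boundary conditions of the present problem, and is therefore deferred to \cref{subsec:Riesz}. Once that stability is available, every remaining step is a routine transcription of the continuous proofs, because the bilinear forms, the lifting bounds, and the Poincar\'e inequalities have all been arranged to take the same form on $\Omega^0$ and on its discrete counterpart.
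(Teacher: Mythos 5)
Your proposal follows essentially the same route as the paper: the paper's own ``proof'' is precisely the reduction to conditions \ref{it:EWo}--\ref{it:Riesz}, the observation that full Dirichlet conditions enter \cite[Proposition~3.3]{SS91} only through the Poincar\'e inequality and the Riesz-projection stability, and a deferral of the latter to \cref{subsec:Riesz}; your ingredients \emph{(a)}--\emph{(c)} and the C\'ea/Strang splitting are exactly what ``following \cite{SS91} without modification'' unpacks to. The only slip is attributional (the machinery is Saavedra--Scott, not ``Shopov--Schaeffer''), not mathematical.
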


\begin{rmrk}\label{rmrk:Eo}
 As observed in the conclusions of \cite{SS91}, the proof of a convergence result like \ref{it:convergence} of \cref{th:discr} exploits that the difference $e_h = \hat E\gtest_h-J_h\hat E\gtest_h$ belongs to $\{\test\in W^1_q(\Omega^0) \st \test=0 \text{ on }\partial\Omega\}$.
 This is straightforwardly granted in the fully-Dirichlet case with fixed contact points considered in \cite{SS91}, since $e_h|_{\Gamma^0}=0$ by definition, and the restrictions of both $\hat E\gtest_h$ and $J_h\hat E\gtest_h$ to $\Sigma^0\cup\Sigma_b$ are set to zero.
 In the present work, instead, the desired property holds because $\hat E\gtest_h$ is linear on the Neumann boundary $\Sigma^0$, and the interpolator $J_h$ preserves linear functions.
\end{rmrk}

%%%%%%%%%%%%%%%%%%%%%%%%%%%%%%%%%%%%%%%%%%%%%%%%%%%%%%%%%%%%%%%%%%%%%%%%%%%%%%%%
\subsection{Stability of Riesz projection}\label{subsec:Riesz}
%%%%%%%%%%%%%%%%%%%%%%%%%%%%%%%%%%%%%%%%%%%%%%%%%%%%%%%%%%%%%%%%%%%%%%%%%%%%%%%%

The present section is devoted to the proof of the inequality \cref{eq:Rieszstable} for the Riesz projection operator defined in \cref{eq:Riesz}.
Since this result may have an interest per se, we collect here the geometrical settings in which our proof takes place:
\begin{itemize}
 \item we consider a rectangular domain $\Omega$ (like the square $\Omega^0$ of the previous sections);
 \item we denote by $\Gamma_D$ a couple of opposite boundary sides of $\Omega$ (that corresponds to $\Gamma^0\cup\Sigma$, in the previous sections);
 \item in the different problems that will be introduced, homogeneous Dirichlet boundary conditions will be enforced, on the boundary $\Gamma_D$, whilst homogeneous Neumann boundary conditions will be applied elsewhere.
\end{itemize}
In particular, the last point ensures some compatibility conditions that provide second-order Sobolev regularity of the functions involved, thanks to results like those in \cite{Lorenzi}.

In order to tackle the main result of the present section, we have to extend the following technical result by Rannacher and Scott:

\begin{lemma}[{\cite[section 3]{RannacherScott}}]\label{lmm:actualRSlemma}
 Denoting by $H^1_0(\Omega)$ the space
 \begin{equation}
  H^1_0(\Omega) = \{v\in W^1_2(\Omega) \st v|_{\partial\Omega}=0\},
 \end{equation}
 let functions $f\in H^1_0(\Omega)$ and $\mathbf f\in[H^1_0(\Omega)]^2$ be given, and let $\check w\in H^1_0(\Omega)$ be such that
 \begin{equation}\begin{cases}
  -\Delta \check{w} = f+\Div\mathbf f &\qquad\text{ in }\Omega, \\
  \check{w} = 0 &\qquad\text{ on }\partial\Omega.
 \end{cases}\end{equation}
 Then, for any convex polygonal domain $\Omega$, there exists an $\alpha_\Omega\in(0,1]$ such that for all parameter values $\alpha\in(0,\alpha_\Omega]$ the following a priori estimates hold 
 \begin{enumerate}[(i)]
  \item if $f\equiv0$,
    \begin{equation}
     \int_\Omega\sigma_{\mathbf z,\zeta}^{2+\alpha}|\nabla^2 \check w|^2 \leq c\left(\int_\Omega\sigma_{\mathbf z,\zeta}^{2+\alpha}|\Div\mathbf f|^2 + \alpha^{-1}\zeta^{-2}\int_\Omega\sigma_{\mathbf z,\zeta}^{2+\alpha}|\mathbf f|^2\right);
    \end{equation} \label{it:fo}
  \item if $\mathbf f\equiv\mathbf 0$,
    \begin{equation}
     \int_\Omega\sigma_{\mathbf z,\zeta}^{-2-\alpha}|\nabla^2 \check w|^2 \leq c\alpha^{-1}\zeta^{-2}\int_\Omega\sigma_{\mathbf z,\zeta}^{2-\alpha}|\nabla f|^2;
    \end{equation} \label{it:fvo}
 \end{enumerate}
 where $\nabla^2$ denotes the Hessian matrix, and $\sigma_{\mathbf z,\zeta}:\Omega\to[0,\infty)$ is defined in terms of an arbitrary point $\mathbf z\in\Omega$ and an arbitrary scalar $\zeta\in\mathbb R$, as $\sigma_{\mathbf z,\zeta}(\mathbf x) = \sqrt{|\mathbf x-\mathbf z|^2 + \zeta^2}
 $.
\end{lemma}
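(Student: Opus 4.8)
The plan is to derive both estimates from a single weighted second-derivative identity, supplemented by weighted energy and Hardy inequalities, after reducing by density to the case of a smooth $\check w$; this reduction is legitimate because the Dirichlet problem on a convex polygon is $H^2$-regular \cite{Lorenzi}. Throughout I write $\sigma=\sigma_{\mathbf z,\zeta}$ for brevity and use only three elementary properties of this weight: $|\nabla\sigma|\leq 1$, $|\nabla^2\sigma|\leq C\sigma^{-1}$, and, crucially, $\sigma\geq\zeta$ pointwise, the last being the source of the factor $\zeta^{-2}$ on the right-hand sides.

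The core step is the pointwise Miranda--Talenti identity $|\nabla^2\check w|^2=(\Delta\check w)^2+\Div\mathbf G$, with $\mathbf G$ quadratic in the first and second derivatives of $\check w$, which I would integrate against $\sigma^s$ for the exponent $s=2+\alpha$ in part \ref{it:fo} and $s=-2-\alpha$ in part \ref{it:fvo}. Integrating the divergence term by parts produces a boundary contribution equal to $-\int_{\partial\Omega}\sigma^s\kappa\,(\partial_n\check w)^2$, where $\kappa\geq0$ is the boundary curvature of the convex polygon (a nonnegative combination of Dirac masses at the vertices) and $\partial_n$ is the outward normal derivative; since $\sigma^s>0$ irrespective of the sign of $s$, this term has a favorable sign and may be dropped. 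The remaining volume term $-\int_\Omega\nabla\sigma^s\cdot\mathbf G$ is controlled using $|\nabla\sigma^s|\leq|s|\,\sigma^{s-1}$ and $|\mathbf G|\leq C|\nabla\check w|\,|\nabla^2\check w|$: Young's inequality absorbs half of the Hessian into the left-hand side and leaves the lower-order term $C s^2\int_\Omega\sigma^{s-2}|\nabla\check w|^2$. This yields the master estimate $\int_\Omega\sigma^s|\nabla^2\check w|^2\leq C\int_\Omega\sigma^s(\Delta\check w)^2+Cs^2\int_\Omega\sigma^{s-2}|\nabla\check w|^2$, valid uniformly in $\mathbf z,\zeta$.

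It then remains to convert the two terms on the right into the stated data. In part \ref{it:fo}, where $\Delta\check w=-\Div\mathbf f$, the first term is already the target $\int_\Omega\sigma^{2+\alpha}|\Div\mathbf f|^2$, while the gradient term $\int_\Omega\sigma^\alpha|\nabla\check w|^2$ is handled by testing the equation with $\sigma^\alpha\check w$: the weight-derivative contributions carry a factor $\alpha$ and are absorbed, giving $\int_\Omega\sigma^\alpha|\nabla\check w|^2\leq C\int_\Omega\sigma^\alpha|\mathbf f|^2$, after which $\sigma^\alpha=\sigma^{2+\alpha}\sigma^{-2}\leq\zeta^{-2}\sigma^{2+\alpha}$ produces the $\zeta^{-2}$. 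In part \ref{it:fvo}, where $\Delta\check w=-f$, the first term $\int_\Omega\sigma^{-2-\alpha}f^2$ is treated, since $f\in H^1_0(\Omega)$, by a weighted Hardy inequality that raises the exponent by two, $\int_\Omega\sigma^{-2-\alpha}f^2\leq C\alpha^{-1}\int_\Omega\sigma^{-\alpha}|\nabla f|^2$, and then $\sigma^{-\alpha}\leq\zeta^{-2}\sigma^{2-\alpha}$ again supplies the $\zeta^{-2}$ and the correct data weight; the residual gradient term $\int_\Omega\sigma^{-4-\alpha}|\nabla\check w|^2$ is absorbed by a parallel weighted energy estimate for $\check w$. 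In both cases the prefactor $\alpha^{-1}$ reflects the borderline, logarithmically divergent character in two dimensions of the weight $\sigma^{-2}$, whose integral regularized by the exponent $\alpha$ scales like $\alpha^{-1}$.

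The main obstacle, and the place where the hypotheses are genuinely used, lies in making the two preceding steps uniform and in fixing the threshold $\alpha_\Omega$. On the one hand, the weighted Hardy inequalities and the absorption arguments hold only for $\alpha$ small enough, and their admissible range is dictated by the opening angles of the polygon; this is exactly what forces $\alpha_\Omega$ to depend on the geometry of $\Omega$. On the other hand, all constants must remain bounded as the base point $\mathbf z$ approaches a vertex, where $\check w$ has only limited regularity and the weight $\sigma$ is most singular: controlling the weighted boundary term there, and keeping its favorable sign, is where convexity together with the restriction $\alpha\leq\alpha_\Omega$ is indispensable. Tracking the precise powers of $\alpha$ and $\zeta$ through these weighted inequalities is the remaining bookkeeping.
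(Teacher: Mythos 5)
First, for context: the paper does not prove \cref{lmm:actualRSlemma} itself --- it is quoted from Rannacher and Scott --- but it gives a detailed proof of the mixed-boundary-condition analogue, \cref{lmm:RSpar3}, built on the same architecture, and that is the natural benchmark. Measured against it, your skeleton (a weighted Hessian identity whose boundary contribution has a favorable sign by convexity, followed by a weighted energy estimate obtained by testing with $\sigma^{\alpha}\check w$) is the right one, and your Miranda--Talenti route for the first step is essentially equivalent to the paper's device of applying the $H^2$ a priori bound of Grisvard to the product $\sigma^{1+\alpha/2}\check w$.

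The gap is in the lower-order terms, which is where the real content of the lemma lies. When you test the equation with $\sigma^{\alpha}\check w$, the weight-commutator term is not absorbable: after integration by parts it equals $\tfrac12\int_\Omega\check w^2\,\Delta\sigma^{\alpha}$, and since $\Delta\sigma^{\alpha}\geq\alpha^2\sigma^{\alpha-2}>0$ it enters the estimate for $\int_\Omega\sigma^{\alpha}|\nabla\check w|^2$ with the unfavorable sign. Absorbing it would require a weighted Hardy inequality $\int_\Omega\sigma^{\alpha-2}\check w^2\leq C\int_\Omega\sigma^{\alpha}|\nabla\check w|^2$ with $C$ uniform in $\zeta$ and $\alpha C$ small; in two dimensions this is exactly the critical case (the unweighted $\int|x|^{-2}v^2\lesssim\int|\nabla v|^2$ fails), the regularization by $\alpha$ and $\zeta$ produces a constant of order $\alpha^{-1}$, and the product is $O(1)$, not small. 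The term $\int_\Omega\sigma^{\alpha-2}\check w^2$ is precisely the crux: in the paper's proof of \cref{lmm:RSpar3} it is handled by a separate duality argument --- the auxiliary problem $-\Delta\phi=\sgn(\check w)\,\check w^{2/\alpha}$, $W^2_{1+\alpha/2}$ elliptic regularity, and two applications of H\"older --- and this is also where the factors $\alpha^{-1}$ and $\zeta^{-2}$ actually originate. Your proposal never confronts this term, so the claimed bound $\int_\Omega\sigma^{\alpha}|\nabla\check w|^2\leq C\int_\Omega\sigma^{\alpha}|\mathbf f|^2$ is unproven. Relatedly, the intermediate Hardy inequality you invoke in part (ii), $\int_\Omega\sigma^{-2-\alpha}f^2\leq C\alpha^{-1}\int_\Omega\sigma^{-\alpha}|\nabla f|^2$, is false as stated: for a fixed bump $f\in H^1_0(\Omega)$ with $f(\mathbf z)\neq0$ the left-hand side grows like $\alpha^{-1}\zeta^{-\alpha}$ as $\zeta\to0$ while the right-hand side stays bounded, so any correct version must carry a negative power of $\zeta$ from the start. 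The same caveat applies to the ``parallel weighted energy estimate'' asserted for $\int_\Omega\sigma^{-4-\alpha}|\nabla\check w|^2$, which is not supplied and whose own commutator terms are even more singular.
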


In particular, we need to consider mixed boundary conditions, instead of fully Dirichlet ones, and thus to prove the following result:

\begin{lemma}\label{lmm:RSpar3}
 Let $w$ be the solution of the following problem over a rectangle $\Omega$:
 \begin{equation}\label{eq:pbRSpar3}\begin{cases}
  -\Delta w = f+\Div\mathbf f & \text{ in }\Omega,\\
  w = 0 & \text{ on }\Gamma_D,\\
  \partial_\normal w = 0 & \text{ on }\partial\Omega\setminus\Gamma_D,
 \end{cases}\end{equation}
 where $\Gamma_D$ is the union of a pair of opposite sides of $\Omega$, and $f\in H^1_{\Gamma_D}(\Omega)$ and $\mathbf f\in[H^1_{\Gamma_D}(\Omega)]^2$ are given functions, such that $\int_{\partial\Omega\setminus\Gamma_D}\mathbf f\cdot\normal=0$.
 Then, there exists a constant $\alpha_\Omega\in(0,1]$ such that, for any $\alpha\in(0,\alpha_\Omega]$, the inequalities \ref{it:fo}-\ref{it:fvo} of \cref{lmm:actualRSlemma} hold for $w$ in the place of $\check w$.
\end{lemma}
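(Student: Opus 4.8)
The plan is to eliminate the Neumann part of the boundary by an even reflection across the two Neumann sides, so that \cref{lmm:RSpar3} reduces to the fully Dirichlet statement of \cref{lmm:actualRSlemma}, and then to transfer the weighted bounds back to $\Omega$. The reflection plays two roles: it upgrades the solution to an $H^2$ function past each Neumann side, and it turns those sides into interior lines, across which no boundary term survives, so that the weighted integration-by-parts identities of Rannacher and Scott can be reproduced verbatim.

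First I would fix coordinates so that $\Gamma_D$ is the pair of horizontal sides and $\partial\Omega\setminus\Gamma_D$ the two vertical sides $\{x=0\}$ and $\{x=1\}$, and reflect $w$ evenly across each of them (periodizing if both are to be removed simultaneously), obtaining a function $W$ that still vanishes on the images of $\Gamma_D$. The key point is that the even reflection is genuinely $H^2$ across each Neumann side: this uses $\partial_\normal w=0$ there together with the up-to-the-boundary $H^2$ regularity of the mixed problem, which holds on the rectangle precisely because every Dirichlet–Neumann corner is a right angle and hence non-singular (this is the compatibility alluded to before the statement, cf.\ \cite{Lorenzi}). The same reflection is applied to the data: $f$ is reflected evenly, while the two components of $\mathbf f$ are reflected with opposite parities — the normal component oddly and the tangential component evenly — so that $\Div\mathbf f$ is again reflected evenly and matches $-\Delta W$. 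The hypothesis $\int_{\partial\Omega\setminus\Gamma_D}\mathbf f\cdot\normal=0$ enters here, guaranteeing that the reflected field carries no spurious source concentrated on the reflection lines.

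With $W$ and the reflected data in hand, I would re-run the weighted energy computations underlying estimates \ref{it:fo} and \ref{it:fvo}, now in the reflected geometry and for the weight $\sigma_{\mathbf z,\zeta}$ centred at the same point $\mathbf z\in\Omega$. The Dirichlet boundary contributions on the images of $\Gamma_D$ vanish exactly as in \cite{RannacherScott}, the former Neumann sides contribute nothing because they are now interior, and the right-angle corners are harmless. Restricting every integral back to the fundamental cell $\Omega$ and using that each reflection is a measure-preserving isometry leaving $|\nabla^2 w|$, $|\Div\mathbf f|$, $|\mathbf f|$ and $|\nabla f|$ pointwise invariant, one recovers inequalities \ref{it:fo}–\ref{it:fvo} for $w$ in place of $\check w$.

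The hard part is twofold. A single bounded reflection cannot remove both Neumann sides at once, so the reduction is really to a periodic strip rather than to a bounded convex polygon, and the weight $\sigma_{\mathbf z,\zeta}$ is \emph{not} invariant under the reflections: its periodic copies grow away from $\mathbf z$, so one must compare them with the single weight centred at $\mathbf z$ and confine the weighted right-hand sides to $\Omega$ rather than summing naively over cells. Equally delicate is the reflection of $\mathbf f$, since the odd reflection of its normal component is only $H^1$ across a Neumann side when that normal trace vanishes pointwise; the integral compatibility condition alone does not suffice, so I expect to need an auxiliary reduction — subtracting a harmonic lifting of the normal trace and estimating it separately — to pass to the vanishing-trace case. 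Controlling the weight and this normal-trace correction are where the argument will require the most care.
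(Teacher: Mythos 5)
Your reflection strategy is a genuinely different route from the paper's, but as written it does not close, and the two ``hard parts'' you flag at the end are precisely where it fails rather than mere technicalities. First, the reduction target is wrong for the lemma you want to invoke: removing \emph{both} Neumann sides by even reflection forces you onto the $2$-periodic extension, i.e.\ an unbounded strip, which is not a convex polygonal domain, so \cref{lmm:actualRSlemma} simply does not apply there. Nor can you run the weighted identities globally on the strip and then restrict: the weight $\sigma_{\mathbf z,\zeta}$ is not periodic and grows, so the global weighted integrals of the (periodic, non-decaying) reflected quantities diverge, and if instead you integrate by parts only over $\Omega$ you reintroduce boundary terms on the former Neumann sides --- which is exactly the original difficulty the reflection was meant to remove. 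Second, the odd reflection of the normal component of $\mathbf f$ is $H^1$ across a Neumann side only if its trace vanishes \emph{pointwise} there; the hypothesis $\int_{\partial\Omega\setminus\Gamma_D}\mathbf f\cdot\normal=0$ is an integral condition on the union of the two sides and does not prevent a distributional line source in $\Div$ of the reflected field (your second paragraph asserts that it does, and your last paragraph correctly retracts this). The proposed fix --- subtracting a lifting of the normal trace --- is not carried out, and it is not clear that the correction can be absorbed with the required $\alpha^{-1}\zeta^{-2}$ dependence of the constants.

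For comparison, the paper does not reflect at all: it reruns the Rannacher--Scott weighted computation directly on $\Omega$ with the mixed boundary conditions. The $H^2$ (and $W^2_s$) control of $w$ by $\Delta w$ for the mixed problem on the rectangle is supplied by \cite{Grisvard} (this is where the right-angle Dirichlet--Neumann corners enter, not through a reflection), the new boundary terms such as $-\tfrac12\int_{\partial\Omega}w^2\,\partial_\normal\sigma^\alpha$ are discarded by the sign condition $\partial_\normal\sigma^\alpha\geq0$ coming from convexity (cf.~\cref{eq:partnormsigmapos}), and the term $\int_\Omega\sigma^{\alpha-2}w^2$ is handled by duality against an auxiliary mixed problem, which is where the hypothesis on $\int_{\partial\Omega\setminus\Gamma_D}\mathbf f\cdot\normal$ is actually used. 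If you want to salvage the reflection idea, you would still have to reproduce essentially all of that computation on the strip, so it buys nothing over the direct argument; I recommend reworking the proof along the paper's lines.
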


These different boundary conditions play a crucial role in the proof of  \cref{lmm:RSpar3}.
Indeed, the regularity results holding for fully Dirichlet boundary conditions do not straightforwardly extend to the case of mixed conditions, for which some additional restrictions on the domain shape and regularity, and on the boundary data, have to be taken into account.
In the framework outlined at the beginning of the present section, we can resort to the regularity results of \cite{Grisvard,Lorenzi}.
In the following we report the proof of \cref{lmm:RSpar3}: we will follow the lines of that of \cref{lmm:actualRSlemma}, showing where the above-cited regularity results are employed and how the boundary integral terms - appearing in the case of mixed conditions - are dealt with.
For ease of notation, throughout the present section, $c$ will denote any positive constant that depends at most on the domain $\Omega$.
The value of this constant may vary from line to line and even within a single line.

\begin{proof}[Proof of \cref{lmm:RSpar3}]
 The proof builds on a bound for the complete $H^2(\Omega)$ norm of $w$ in terms of its Laplacian, in the form
 \begin{equation}\label{eq:Hlapl}
  \|w\|_{H^2(\Omega)} \leq c\left(\|\Delta w\|_{L^2(\Omega)} + \|w\|_{L^2(\Omega)}\right),
 \end{equation}
 that can be found, for a generic polygon, in \cite[Theorem 4.3.1.4]{Grisvard}.
 To simplify the notation, the dependence of $\sigma_{\mathbf z,\zeta}$ on $\mathbf z$ and $\zeta$ will be understood.

 Concerning point \ref{it:fvo}, the proof follows the lines of \cite{RannacherScott}, thanks to the fact that an inequality like \cref{eq:Hlapl}, involving $L^2$-type spaces, still holds if $L^{2/(2-\alpha)}$-type spaces are considered, for any $\alpha$ \cite[Theorem 4.3.2.4]{Grisvard}.

 Regarding point \ref{it:fo}, we follow the ideas of the proof of a similar result by \cite{RannacherScott}.
 To this aim, we need to collect the following two instrumental properties of the weight function $\sigma$.
 First, we notice that \cite[(2.2)]{GiraultNochettoScott}
 \begin{equation}\label{eq:GNS}
  |\nabla^k\sigma^\alpha| \leq C_{k,\alpha}\sigma^{\alpha-k},
 \end{equation}
 where the superscript $k$ denotes the $k$-th derivative order, and the constant $C_{k,\alpha}$ depends only on $k$ and $\alpha$.
 Moreover,
 \begin{equation}\label{eq:partnormsigmapos}
  \partial_\normal\sigma^\alpha = \alpha\sigma^{\alpha-1}\frac{(\mathbf x - \mathbf z)\cdot\normal}{\sigma} = \alpha\sigma^{\alpha-2}(\mathbf x - \mathbf z)\cdot\normal \quad \forall\mathbf x\in\partial\Omega,
 \end{equation}
 and being $\Omega$ convex, $\partial_\normal\sigma^\alpha\geq 0$ on the whole boundary $\partial\Omega$.
 
 Now we are ready to prove \ref{it:fo}. Since
 \begin{equation}
  \nabla^2(\sigma^{1+\alpha/2}w) = \sigma^{1+\alpha/2}\nabla^2w + w\nabla^2\sigma^{1+\alpha/2} + \nabla w\otimes\nabla\sigma^{1+\alpha/2} + \nabla\sigma^{1+\alpha/2}\otimes\nabla w,
 \end{equation}
 employing the triangle inequality, together with \cref{eq:GNS,eq:Hlapl}, yields
 \begin{align}
  \int_\Omega\sigma^{2+\alpha}&|\nabla^2w|^2 \leq \int_\Omega|\nabla^2(\sigma^{1+\alpha/2}w)|^2 + c\int_\Omega w^2\sigma^{\alpha-2} + c\int_\Omega|\nabla w|^2\sigma^\alpha\\
  &\leq c \int_\Omega\left(\sigma^{2+\alpha}|\nabla^2 w|^2 + w^2|\nabla^2\sigma^{1+\alpha/2}|^2 + 2|\nabla w|^2|\nabla\sigma^{1+\alpha/2}|^2\right)\\
    &\qquad + c\int_\Omega w^2\sigma^{\alpha-2} + c\int_\Omega|\nabla w|^2\sigma^\alpha\\
  &\leq c\int_\Omega\left(\sigma^{2+\alpha}|\Div\mathbf f|^2+w^2\sigma^{\alpha-2} + |\nabla w|^2\sigma^\alpha\right).
 \end{align}
 To control the last term at the right-hand side of this inequality, we observe that the weak formulation of problem \cref{eq:pbRSpar3} is 
 \begin{equation}\label{eq:weakpbRSpar3}\phantom{\ref{eq:weakpbRSpar3}}
  \int_\Omega\nabla w\cdot\nabla v = \int_\Omega v\,\Div\mathbf f \qquad \forall v\in H^1_{\Gamma_D}(\Omega).
 \end{equation}
 Therefore, recalling that $\partial_\normal\sigma^\alpha\geq0$ on $\partial\Omega$, the following steps can be performed:
 \begin{equation}\label{eq:sigmanablaw}\begin{aligned}
  \int_\Omega\sigma^\alpha|\nabla w|^2 &= \int_\Omega\nabla w\cdot\nabla(\sigma^\alpha w) - \frac{1}{2}\int_\Omega\nabla(w^2)\cdot\nabla\sigma^\alpha \\
  &\overset{(\ref{eq:weakpbRSpar3})}{=}\int_\Omega\Div(\mathbf f)\,\sigma^\alpha w + \frac{1}{2}\int_\Omega w^2\Delta\sigma^\alpha - \frac{1}{2}\int_{\partial\Omega}w^2\partial_\normal\sigma^\alpha\\
  &\overset{(\ref{eq:GNS})}{\leq} \int_\Omega\sigma^{\alpha+2}\Div\mathbf f\ \sigma^{-2}w + c\int_\Omega w^2\sigma^{\alpha-2}\\
  &\leq c\int_\Omega\sigma^{2+\alpha}|\Div\mathbf f|^2 + c\int_\Omega\sigma^{\alpha-2}w^2.
 \end{aligned}\end{equation}
 Now, to conclude the proof, a proper bound for $\int_\Omega\sigma^{\alpha-2}w^2$ is required.
 To this aim, we introduce a function $\phi$ solving the following problem:
 \begin{equation}\begin{cases}
  -\Delta\phi = \sgn(w)\,w^{2/\alpha} & \text{ in }\Omega,\\
  \phi = 0 & \text{ on }\Gamma_D,\\
  \partial_\normal\phi = 0 & \text{ on }\partial\Omega\setminus\Gamma_D.
 \end{cases}\end{equation}
 Being $w\in H^1(\Omega)$, it belongs to $L^s(\Omega)$ for any $s\in[1,\infty)$, in particular to $L^{1+\alpha/2}(\Omega)$.
 Thus the function $\phi$ belongs to $W^2_{1+\alpha/2}(\Omega)$, and an inequality similar to \cref{eq:Hlapl} holds for any $\alpha\neq0$ \cite[Theorem 4.3.2.4]{Grisvard}:
 \begin{equation}
  \|\phi\|_{W^2_{1+\alpha/2}(\Omega)} \leq c\left(\|\sgn(w)\,w^{2/\alpha}\|_{L^{1+\alpha/2}(\Omega)} + \|\phi\|_{L^{1+\alpha/2}(\Omega)}\right).
 \end{equation}
 This inequality, combined with the hypothesis $\int_{\partial\Omega\setminus\Gamma_D}\mathbf f\cdot\normal =0$ and a careful employment of H\"older inequality, yields
 \begin{align}
  \|w\|_{L^{1+2/\alpha}(\Omega)}^{1+2/\alpha} &= \int_\Omega w\,\sgn(w)\,w^{2/\alpha} = \int_\Omega \nabla w\cdot\nabla\phi = \int_\Omega\Div\mathbf f\ \phi \\
  &\leq \left|\int_\Omega\mathbf f\cdot\nabla\phi\right|\leq c\|\mathbf f\|_{L^{\frac{4+2\alpha}{2+3\alpha}}(\Omega)}\|w\|_{L^{1+2\alpha}(\Omega)}^{2/\alpha}.
 \end{align}
 whence
 \begin{equation}\label{eq:holderfancy}\begin{aligned}
  \|w\|_{1+2/\alpha}&\leq c\|\mathbf f\|_{L^{\frac{4+2\alpha}{2+3\alpha}}(\Omega)}= c\left(\int_\Omega\sigma^{(1+\alpha/2)\frac{4+2\alpha}{2+3\alpha}}|\mathbf f|^\frac{4+2\alpha}{2+3\alpha}\ \sigma^{-(1+\alpha/2)\frac{4+2\alpha}{2+3\alpha}}\right)^\frac{2+3\alpha}{4+2\alpha}\\
  &\leq\left(\int_\Omega\sigma^{2+\alpha}|\mathbf f|^2\right)^{1/2}\left(\int_\Omega\sigma^{-(2+\alpha)^2/(2\alpha)}\right)^{\alpha/(2+\alpha)},
 \end{aligned}\end{equation}
 where in the last step, H\"older inequality has been employed again.
 Now, noticing that (cf.~\cref{eq:GNS})
 \begin{equation}\label{eq:nablaksigmalambda}
  \|\nabla^k\sigma\|_{L^\infty(\Omega)} \leq c\zeta^{1-k},
 \end{equation}
 we can further bound \cref{eq:sigmanablaw} and \cref{eq:holderfancy} as
 \begin{align}
  \int_\Omega\sigma^\alpha|\nabla w|^2 &\leq c\int_\Omega\sigma^{2+\alpha}|\Div\mathbf f|^2 + c(\alpha^{-1}\zeta^{-\alpha})^{(2-\alpha)/(2+\alpha)}\|w\|_{1+2/\alpha}^2,\\
  \|w\|_{1+2/\alpha} &\leq c\zeta^{-(4+\alpha^2)/(4+2\alpha)}\left(\int_\Omega\sigma^{2+\alpha}|\mathbf f|^2\right)^{1/2}.
 \end{align}
 Merging these two inequalities gives thesis \ref{it:fo} for $\alpha_\Omega=1$.
\end{proof}

The inequalities of \cref{lmm:RSpar3} are instrumental to the proof of the following result, that actually states the stability of the Riesz projection operator defined in \cref{eq:Riesz}.

\begin{proposition}
\label{prop:projection}
 Let $\Gamma_D$ be a portion of a polygonal domain $\Omega\subset\mathbb R^2$, discretized as a regular mesh $\mathcal T_h$ having discretization step $h$.
 Then, the Riesz projection defined as in \cref{eq:Riesz} is stable in $W^1_p(\Omega)$, for any $p\in[1,\infty)$, i.e.\ \cref{eq:Rieszstable} holds independently of $p$.
\end{proposition}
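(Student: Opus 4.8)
The plan is to follow the weighted-norm technique of Rannacher and Scott, for which \cref{lmm:RSpar3} supplies the a priori estimates needed in the present mixed-boundary setting. I would organize the argument around three regimes of the exponent. The case $p=2$ is immediate: by \cref{eq:Riesz} the operator $R_h$ is the orthogonal projection with respect to the energy seminorm $\int_\Omega\nabla\cdot\nabla$, so $\|\nabla R_h u\|_{L^2(\Omega)}\le\|\nabla u\|_{L^2(\Omega)}$, and a Poincar\'e inequality on the $\Gamma_D$-vanishing space (cf.\ \cref{rmrk:poincare}) upgrades this to full $W^1_2$ stability. The decisive regime is $p=\infty$; once the bound $\|\nabla R_h u\|_{L^\infty(\Omega)}\le C\|\nabla u\|_{L^\infty(\Omega)}$ is established with $C$ independent of $h$, the range $2\le p\le\infty$ follows by interpolation between the $W^1_2$ and $W^1_\infty$ estimates, and the range $1\le p<2$ follows by duality, exploiting that $R_h$ is self-adjoint for the energy inner product (from $\int_\Omega\nabla R_h u\cdot\nabla R_h w=\int_\Omega\nabla u\cdot\nabla R_h w=\int_\Omega\nabla R_h u\cdot\nabla w$). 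Since the interpolation and duality constants can be taken uniform, the final constant $C_R$ in \cref{eq:Rieszstable} is independent of $p$.

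For the core $W^1_\infty$ estimate I would fix a unit direction $e$ and a point $\mathbf z\in\Omega$ at which $|\partial_e R_h u|$ is (nearly) maximal, and represent the pointwise derivative through a smoothed directional Dirac mass supported at scale $h$ about $\mathbf z$. This gives $\partial_e R_h u(\mathbf z)\approx\int_\Omega\nabla R_h u\cdot\nabla G$, where $G$ is the regularized Green's function solving the mixed problem \cref{eq:pbRSpar3} with right-hand side of the form $\Div\mathbf f$, $\mathbf f=e\,\delta_{\mathbf z}^h$ and $f\equiv0$; the compatibility $\int_{\partial\Omega\setminus\Gamma_D}\mathbf f\cdot\normal=0$ required by \cref{lmm:RSpar3} holds once the mollifier is supported in the interior. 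Setting $G_h=R_h G$ and invoking twice the Galerkin orthogonality built into \cref{eq:Riesz}, one collapses $\int_\Omega\nabla R_h u\cdot\nabla G=\int_\Omega\nabla R_h u\cdot\nabla G_h=\int_\Omega\nabla u\cdot\nabla G_h$, so that $|\partial_e R_h u(\mathbf z)|\le\|\nabla u\|_{L^\infty(\Omega)}\,\|\nabla G_h\|_{L^1(\Omega)}$ up to a controllable regularization remainder. Everything therefore reduces to the uniform bound $\|\nabla G_h\|_{L^1(\Omega)}\le C$, with $C$ independent of $h$ and of the center $\mathbf z$.

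The main obstacle is precisely this uniform $L^1$ bound, and it is here that \cref{lmm:RSpar3} is essential. I would decompose $\Omega$ into dyadic annuli $A_j=\{\mathbf x:2^{-j-1}\le|\mathbf x-\mathbf z|\le2^{-j}\}$, truncated by an innermost disc of radius $\sim h$, and on each annulus estimate $\|\nabla G_h\|_{L^1(A_j)}\le|A_j|^{1/2}\|\nabla G_h\|_{L^2(A_j)}$ by Cauchy--Schwarz. Applying part \ref{it:fo} of \cref{lmm:RSpar3} with the weight $\sigma_{\mathbf z,\zeta}$ centered at $\mathbf z$ and $\zeta\sim h$ yields the requisite decay of $\int_{A_j}|\nabla^2G|^2$ (since $\sigma_{\mathbf z,\zeta}\sim2^{-j}$ on $A_j$), which, combined with standard finite-element approximation and an inverse estimate on the quasi-uniform mesh, transfers to $\|\nabla G_h\|_{L^2(A_j)}$; the geometric series $\sum_j|A_j|^{1/2}\|\nabla G_h\|_{L^2(A_j)}$ then sums to a constant independent of $h$ and $\mathbf z$, while the innermost disc is handled separately via the $L^2$ energy bound and an inverse inequality. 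The genuinely new difficulty relative to \cite{RannacherScott} is that the Neumann part of $\partial\Omega$ generates boundary terms and demands second-order regularity for mixed data; these are exactly the features already absorbed into \cref{lmm:RSpar3} through the sign property $\partial_\normal\sigma^\alpha\ge0$ and the compatibility condition on $\mathbf f$, so that once those weighted estimates are in hand the dyadic summation proceeds as in the fully Dirichlet case.
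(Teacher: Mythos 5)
Your overall route is legitimate and is organized differently from the paper's: you isolate the case $p=\infty$ via a regularized Green's function and then recover $1\le p<\infty$ by interpolation and duality, whereas the paper never treats $p=\infty$ at all. Instead it bounds the single weighted quantity $M_h=\max_{z\in\Omega}\|\sigma^{1+\alpha/2}\nabla(g_z-R_hg_z)\|_{L^2(\Omega)}$ and inserts it into the Schur-type estimate $\|\partial_iR_h\sol\|_{L^p(\Omega)}\leq c\|\nabla\sol\|_{L^p(\Omega)}\bigl(1+M_h/\sqrt{\alpha h^\alpha}\bigr)$, which yields all $p\in[1,\infty)$ at once, with a constant independent of $p$ and with no interpolation or duality step. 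Both arguments rest on the same new ingredient, namely the mixed-boundary weighted estimates of \cref{lmm:RSpar3} and the convexity-driven sign $\partial_\normal\sigma^\alpha\geq0$, which you correctly identify as the point where the Neumann part of the boundary is absorbed.

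There is, however, one step in your plan that is genuinely under-justified and conceals essentially all of the finite-element work. You assert that the weighted bound on $\int_\Omega\sigma^{2+\alpha}|\nabla^2G|^2$ from \ref{it:fo} of \cref{lmm:RSpar3} ``transfers to $\|\nabla G_h\|_{L^2(A_j)}$'' by ``standard finite-element approximation and an inverse estimate''. It does not: $G_h=R_hG$ is a \emph{global} projection, so annulus-wise control of $\nabla(G-G_h)$ cannot follow from global best approximation plus inverse inequalities. One needs either genuine interior error estimates with explicit control of the pollution from the remaining annuli, or the global weighted superapproximation argument the paper actually carries out: Galerkin orthogonality tested with $\psi_z=\sigma^{2+\alpha}(g_z-R_hg_z)$, the boundary term $-\tfrac12\int_{\partial\Omega}(g_z-R_hg_z)^2\,\partial_\normal\sigma^{2+\alpha}$ discarded by convexity, and an auxiliary mixed boundary-value problem --- requiring part \ref{it:fvo} of \cref{lmm:RSpar3}, which your sketch never invokes --- to absorb the lower-order term $\int_\Omega\sigma^\alpha(g_z-R_hg_z)^2$ as in \cref{eq:postlemma1}. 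Until that loop is closed, the geometric series over the annuli is not justified. A secondary caveat: the duality step at and near $p=1$ needs $L^\infty$-stability with respect to the divergence-form datum, i.e.\ $\|\nabla R_hw_{\mathbf g}\|_{L^\infty(\Omega)}\leq C\|\mathbf g\|_{L^\infty(\Omega)}$ for $\int_\Omega\nabla w_{\mathbf g}\cdot\nabla v=\int_\Omega\mathbf g\cdot\nabla v$, and not merely $\|\nabla R_h\sol\|_{L^\infty}\leq C\|\nabla\sol\|_{L^\infty}$, because the continuous solution operator is not bounded on $W^1_\infty$; your bound $\|\nabla G_h\|_{L^1(\Omega)}\leq C$ does deliver this stronger form, but the argument should state it explicitly to keep the constants uniform down to $p=1$.
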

\begin{proof}
 We follow the proof of a similar result, stated in \cite[section~2]{RannacherScott}, for the case of fully Dirichlet boundary conditions.
 The main difference lies in the boundary conditions imposed on the auxiliary problems that are going to be introduced.
 Anyway, thanks to \cref{lmm:RSpar3}, only little further difficulties will arise.
 For completeness, we report the whole proof in our framework.
Let us denote by $H^1_{\Gamma_D}(\Omega)$ the usual Hilbert space
 \begin{equation}
  H^1_{\Gamma_D}(\Omega) = \{\test\in W^1_2(\Omega) \st \test|_{\Gamma_D}=0\}.
 \end{equation}
Consider now a point $\mathbf z$ inside
 a triangle $K_z\in\mathcal T_h$ and let $\delta_z\in C_0^\infty(K_z)$ be an approximation of the Dirac delta concentrated in $\mathbf z$, such that \cite{Wihler,Scottdelta}
 \begin{gather}
  \int_\Omega\delta_z = 1,\qquad \|\nabla^k\delta_z\|_\infty\leq ch^{-2-k},\quad \forall k\in\mathbb N,\label{eq:deltazbound}\\
  \qquad\partial_i\varphi(\mathbf z)=\int_\Omega\delta_z\,\partial_i\varphi,\qquad \forall\varphi\in\Vo,\quad i=1,2.\label{eq:deltazisdelta}
 \end{gather}
 It is worthwhile to observe already at this early stage that the generic point $\mathbf z$ on which $\delta_z$ is concentrated belongs to the interior of $\Omega$: for the present proof, there will be no need to consider the possibility of choosing $\mathbf z$ on the boundary $\partial\Omega$.
 Fix $i\in\{1,2\}$ and let $g_z\in H^1_{{\Gamma_D}}(\Omega)$ be a regularized $i-$th derivative of the Green function for the Laplacian, defined as the solution of
 \begin{equation}\label{eq:pbgz}
  \int_\Omega\nabla g_z\cdot\nabla\varphi = \int_\Omega\delta_z\,\partial_i\varphi,\qquad \forall\varphi\in H^1_{{\Gamma_D}}(\Omega).
 \end{equation}
 Thence, combining \cref{eq:pbgz} with \cref{eq:deltazisdelta} and the definition \cref{eq:Riesz} of the Riesz operator yields
 \begin{align}
  \partial_iR_h\sol(\mathbf z) &\overset{(\ref{eq:deltazisdelta})}{=} \int_\Omega\delta_z\,\partial_iR_h\sol \overset{(\ref{eq:pbgz})}{=} \int_\Omega\nabla g_z\cdot\nabla R_h\sol \overset{(\ref{eq:Riesz})}{=} \int_\Omega\nabla R_hg_z\cdot\nabla R_h\sol\\
  &\overset{(\ref{eq:Riesz})}{=} \int_\Omega\nabla R_hg_z\cdot\nabla\sol = \int_\Omega\nabla g_z\cdot\nabla \sol - \int_\Omega\nabla\sol\cdot\nabla\left(g_z-R_hg_z\right)\\ &\overset{(\ref{eq:pbgz})}{=} \int_\Omega\delta_z\,\partial_i\sol - \int_\Omega\nabla\sol\cdot\nabla(g_z-R_hg_z).
 \end{align}
 Let us introduce the weight function
 \begin{equation}
  \sigma(x) = \sqrt{|\mathbf x-\mathbf z|+\kappa^2h^2},
 \end{equation}
 with a fixed $\kappa\geq1$ independent of $h$, for which, thanks to \cref{eq:nablaksigmalambda},
 \begin{equation}\label{eq:nablaksigma}
  \|\nabla^k\sigma\|_{L^\infty(\Omega)} \leq c(\kappa h)^{1-k}.
 \end{equation}
 Then, one can show that \cite[(2.6)]{RannacherScott}
 \begin{equation}
  \|\partial_iR_h\sol\|_{L^p(\Omega)}\leq c\|\nabla\sol\|_{L^p(\Omega)}\left(1+\frac{M_h}{\sqrt{\alpha h^\alpha}}\right),
 \end{equation}
 where $\alpha$ is a generic scalar in $(0,1]$ and
 \begin{equation}
  M_h = \max_{z\in\Omega}\|\sigma^{1+\alpha/2}\nabla(g_z-R_hg_z)\|_{L^2(\Omega)}.
 \end{equation}
 Therefore, a sufficient condition for the thesis of the present lemma is that $M_h\leq c_\alpha h^\alpha$ for a proper choice of $\kappa,\alpha$.
 The rest of the proof is, thus, devoted to show that the quantity $M_z=\|\sigma^{1+\alpha/2}\nabla(g_z-R_hg_z)\|_{L^2(\Omega)}$ fulfills $M_z\leq c_\alpha h^\alpha$, independently of $\mathbf z$.
 
 Introducing the quantity $\psi_z=\sigma^{2+\alpha}(g_z-R_hg_z)$ and employing the Galerkin orthogonality stemming from \cref{eq:Riesz}, we can rewrite
 \begin{align}
  M_z^2 &= \int_\Omega\sigma^{2+\alpha}|\nabla(g_z-R_hg_z)|^2 \\
  &= \int_\Omega\nabla(g_z-R_hg_z)\cdot\nabla(\psi_z-\interpol_h\psi_z) - \int_\Omega\nabla(g_z-R_hg_z)\cdot\nabla\sigma^{2+\alpha}(g_z-R_hg_z)\\
  &= \int_\Omega\nabla(g_z-R_hg_z)\cdot\nabla(\psi_z-\interpol_h\psi_z) - \frac{1}{2}\int_\Omega\nabla(g_z-R_hg_z)^2\cdot\nabla\sigma^{2+\alpha} \\
  &= \int_\Omega\nabla(g_z-R_hg_z)\cdot\nabla(\psi_z-\interpol_h\psi_z) + \frac{1}{2}\int_\Omega(g_z-R_hg_z)^2\Delta\sigma^{2+\alpha} \\
  &\qquad - \frac{1}{2}\int_{\partial\Omega}(g_z-R_hg_z)^2\partial_\normal\sigma^{2+\alpha},
 \end{align}
 where $\interpol_h$ denotes the classical Lagrange interpolator onto the piecewise linear finite element space $\Vo$.
 Being the domain $\Omega$ convex, the normal derivative of $\sigma^{2+\alpha}$ is positive (cf.\ \cref{eq:partnormsigmapos}), and hence,
 \begin{align}
  M_z^2 &\leq\int_\Omega\nabla(g_z-R_hg_z)\cdot\nabla(\psi_z-\interpol_h\psi_z) + \frac{1}{2}\int_\Omega(g_z-R_hg_z)^2\Delta\sigma^{2+\alpha} \\
  &\leq\frac{1}{2}\int_\Omega\sigma^{2+\alpha}|\nabla(g_z-R_hg_z)|^2 + \frac{1}{2}\int_\Omega\sigma^{-2-\alpha}|\nabla(\psi_z-\interpol_h\psi_z)|^2\\
  &\qquad + \frac{1}{2}\int_\Omega(g_z-R_hg_z)^2\Delta\sigma^{2+\alpha}\\
  &= \frac{1}{2}M_z^2 + \frac{1}{2}\int_\Omega\sigma^{-2-\alpha}|\nabla(\psi_z-\interpol_h\psi_z)|^2 + \frac{1}{2}\int_\Omega(g_z-R_hg_z)^2\Delta\sigma^{2+\alpha}.
 \end{align}
 Thanks to \cref{eq:GNS}, we can then obtain
 \begin{equation}
  M_z^2 \leq \int_\Omega\sigma^{-2-\alpha}|\nabla(\psi_z-\interpol_h\psi_z)|^2 + c\int_\Omega\sigma^\alpha(g_z-R_hg_z)^2.
 \end{equation}

Since $\max_{K\in\mathcal T_h}(\max_K\sigma\,/\,\min_K\sigma)\leq c$, the classical interpolation error estimate (see, e.g., \cite[Theorem 3.4.3]{QV}) can be extended to the weighted-norm case, namely
\begin{equation}\label{eq:interpol}
 \int_\Omega\sigma^\rho|\nabla(v-\interpol_hv)|^2 \leq ch^2\sum_{K\in\mathcal T_h}\int_K\sigma^\rho|\nabla^2v|^2
\end{equation}
holds for any $\rho\in\mathbb R$ and for any $v\in H^2(\Omega)$.
Thus, recalling the definition of $\psi_z$ and inequality \cref{eq:nablaksigma}, the interpolation error estimate \cref{eq:interpol} yields
\begin{align}\label{eq:prepreprelemma1}
 M_z^2 &\leq ch^2\sum_{K\in\mathcal T_h}\int_K\sigma^{-2-\alpha}\left[\sigma^{4+2\alpha}|\nabla^2(g_z-R_hg_z)|^2 +(g_z-R_hg_z)^2|\nabla^2\sigma^{2+\alpha}|^2 \right.\\
 &\qquad\qquad\qquad\qquad\qquad\left.+ 2|\nabla (g_z-R_hg_z)|^2|\nabla\sigma^{2+\alpha}|^2\right]\\
 &\qquad+ c\int_\Omega\sigma^\alpha(g_z-R_hg_z)^2.
\end{align}
Now, observing that $\nabla^2R_hg_z=0$, because $R_hg_z$ is piecewise linear, and employing \cref{eq:GNS,eq:nablaksigma} gives
\begin{align}\label{eq:preprelemma1}
 M_z^2&\leq ch^2\int_\Omega\sigma^{2+\alpha}|\nabla^2g_z|^2 + c\int_\Omega(g_z-R_hg_z)^2\left(\sigma^\alpha+ch^2\sigma^{-2-\alpha}\right) \\
 &\qquad + 2ch^2\int_\Omega\sigma^{-2}\sigma{\alpha+2}|\nabla(g_z-R_hg_z)|^2\\
 &\leq ch^2\int_\Omega\sigma^{2+\alpha}|\nabla^2g_z|^2 + c\kappa^{-2}\int_\Omega\sigma^{2+\alpha}|\nabla(g_z-R_hg_z)|^2 \\
 &\qquad + c(1+\kappa^{-2})\int_\Omega\sigma^\alpha(g_z-R_hg_z)^2,
\end{align}
whence, for $\kappa$ large enough,
\begin{equation}\label{eq:prelemma1}
 M_z^2\leq ch^2\int_\Omega\sigma^{2+\alpha}|\nabla^2g_z|^2 + c\int_\Omega\sigma^\alpha(g_z-R_hg_z)^2.
\end{equation}

In order to control the last term of \cref{eq:prelemma1}, we introduce the following auxiliary problem:
\begin{equation}
 \begin{cases}
  -\Delta w = \sigma^\alpha(g_z-R_hg_z) & \text{ in }\Omega,\\
  w = 0 & \text{ on }\Gamma_D,\\
  \partial_\normal w=0 & \text{ on }\partial\Omega\setminus\Gamma_D.
 \end{cases}
\end{equation}
Thanks to \cref{lmm:RSpar3} and \cref{eq:GNS}, the solution $w$ to the problem belongs to\linebreak[4] $H^1_{\Gamma_D}(\Omega)\cap H^2(\Omega)$, and the following inequality holds:
\begin{align}
 \int_\Omega\sigma^{-2-\alpha}|\nabla^2 w|^2 &\leq c\alpha^{-1}(\kappa h)^{-2}\int_\Omega\sigma^{2-\alpha}|\nabla[\sigma^\alpha(g_z-R_hg_z)]|^2\\
 &\leq c\alpha^{-1}(\kappa h)^{-2}\int_\Omega\left[\sigma^\alpha(g_z-R_hg_z)^2 + \sigma^{2+\alpha}|\nabla(g_z-R_hg_z)|^2\right]\\
 &= c\alpha^{-1}(\kappa h)^{-2}\left[M_z^2 + \int_\Omega \sigma^\alpha(g_z-R_hg_z)^2\right].
\end{align}

Being $(g_z-R_hg_z)\in H^1_{\Gamma_D}(\Omega)$, and resorting again to the $H^1$-orthogonality of this function w.r.t.\ the discrete space, the last integral of \cref{eq:prelemma1} can be bounded as follows:
\begin{equation}\label{eq:prepostlemma1}\begin{aligned}
 \int_\Omega\sigma^\alpha(g_z-R_hg_z)^2 &= \int_\Omega\nabla(w-\interpol_hw)\cdot\nabla(g_z-R_hg_z) \\
 &\leq M_z\left(\int_\Omega\sigma^{-2-\alpha}|\nabla(w-\interpol_hw)|^2\right)^{1/2}\\
 &\leq c(\alpha\kappa)^{-1}M_z^2 + c\,\alpha\kappa\,h^2\int_\Omega\sigma^{-2-\alpha}|\nabla^2w|^2\\
 &\leq c(\alpha\kappa)^{-1}M_z^2 + c\,\kappa^{-1}\left[M_z^2 + \int_\Omega\sigma(g_z-R_hg_z)^2\right] \\
 &\leq 2c(\alpha\kappa)^{-1} M_z^2 + c\kappa^{-1}\int_\Omega\sigma(g_z-R_hg_z)^2,
\end{aligned}\end{equation}
whence, for $\kappa$ large enough,
\begin{equation}\label{eq:postlemma1}
 \int_\Omega\sigma^\alpha(g_z-R_hg_z)^2 \leq \frac{c}{\kappa-1}M_z^2.
\end{equation}

 Then, combining \cref{eq:prelemma1} and \cref{eq:postlemma1} and choosing $\kappa$ large enough provides
 \begin{equation}
  M_z^2\leq ch^2\int_\Omega\sigma^{2+\alpha}|\nabla^2g_z|^2.
 \end{equation}

 In the last step of the proof, we employ \ref{it:fo} of \cref{lmm:RSpar3} on $g_z$, with $\mathbf f = \delta_z\,\mathbf e_i$.
 Indeed, since $\delta_z|_{\partial\Omega}=0$ and $g_z$ fulfills \cref{eq:pbgz}, $g_z$ is the solution of
 \begin{equation}\begin{cases}
  -\Delta g_z = \Div \mathbf f & \text{ in }\Omega, \\
  g_z = 0 & \text{ on }\Gamma_D, \\
  \partial_\normal g_z = 0 & \text{ on }\partial\Omega\setminus\Gamma_D.
 \end{cases}\end{equation}
 Thus, the following a priori estimate holds:
 \begin{equation}
  M_z^2 \leq c\,h^2\left[\int_\Omega\sigma^{2+\alpha}|\nabla\delta_z|^2 + \alpha^{-1}(\kappa h)^{-2}\int_\Omega\sigma^{2+\alpha}|\delta_z|^2\right],  
 \end{equation}
 whence, recalling also \cref{eq:deltazbound,eq:nablaksigma},
 \begin{equation}
  M_z^2 \leq ch^\alpha + c\alpha^{-1}\kappa^{-2}h^\alpha.
 \end{equation}

 Eventually, choosing $\kappa$ large enough, a bound of the form $M_z\leq c_\alpha h^\alpha$ is proven.
 Since the right-hand side of such inequality does not depend on the point $\mathbf z$, this concludes the proof.
\end{proof}

%%%%%%%%%%%%%%%%%%%%%%%%%%%%%%%%%%%%%%%%%%%%%%%%%%%%%%%%%%%%%%%%%%%%%%%%%%%%%%%%
%%%%%%%%%%%%%%%%%%%%%%%%%%%%%%%%%%%%%%%%%%%%%%%%%%%%%%%%%%%%%%%%%%%%%%%%%%%%%%%%
\section{Conclusions}
\label{sec:conclusioni}
\addcontentsline{toc}{section}{\nameref{sec:conclusioni}}
%%%%%%%%%%%%%%%%%%%%%%%%%%%%%%%%%%%%%%%%%%%%%%%%%%%%%%%%%%%%%%%%%%%%%%%%%%%%%%%%
%%%%%%%%%%%%%%%%%%%%%%%%%%%%%%%%%%%%%%%%%%%%%%%%%%%%%%%%%%%%%%%%%%%%%%%%%%%%%%%%

The present work has dealt with the theoretical and numerical analysis of a free boundary problem for the Laplacian with mixed boundary conditions,
where the contact points were free to move, and contact angles have been enforced.
The treatment of this latter condition is new, in this context.
Uniqueness and local existence of the solution of the continuous problem have been proved, via a fixed-point argument.
The proof has hinged upon the suitable definition of a lifting operator extending functions defined on the free surface.
Then, piecewise linear finite elements have been introduced to discretize both the free-boundary function $\omega$ and the bulk solution $\sol$.
In these settings, the Riesz projector onto the discrete bulk space has been proved to be stable with respect to the $W^1_p$ norm.
Finally, this result has been employed to prove the well-posedness and the optimal convergence of the discrete approximation.

\appendix

%%%%%%%%%%%%%%%%%%%%%%%%%%%%%%%%%%%%%%%%%%%%%%%%%%%%%%%%%%%%%%%%%%%%%%%%%%%%%%%%
%%%%%%%%%%%%%%%%%%%%%%%%%%%%%%%%%%%%%%%%%%%%%%%%%%%%%%%%%%%%%%%%%%%%%%%%%%%%%%%%
\section*{Acknowledgments}
%%%%%%%%%%%%%%%%%%%%%%%%%%%%%%%%%%%%%%%%%%%%%%%%%%%%%%%%%%%%%%%%%%%%%%%%%%%%%%%%
%%%%%%%%%%%%%%%%%%%%%%%%%%%%%%%%%%%%%%%%%%%%%%%%%%%%%%%%%%%%%%%%%%%%%%%%%%%%%%%%

The author thanks the Department of Mathematics of the University of Maryland, College Park, and in particular R.H.~Nochetto, for hosting him for a visiting period, during which part of the present work was carried out. Both R.H.~Nochetto and H.~Antil are thanked for the interesting interactions in the preliminary stage of the present work.
The author also shows his gratitude to N.~Parolini and M.~Verani, for the useful discussions about different aspects of the results presented here.
% Moxoff s.p.a.\ is gratefully acknowledged for the financial support to this research activity.

\end{document}